\pgfplotsset{compat=1.15}
\newtheorem{theorem}{Theorem}[section]
\newtheorem{corollary}[theorem]{Corollary}
\newtheorem{definition}[theorem]{Definition}
\newtheorem{lemma}[theorem]{Lemma}
\newtheorem{proposition}[theorem]{Proposition}
\newtheorem{remark}[theorem]{Remark}
\newenvironment{proof}{\medskip\noindent{\bf Proof.}\;}{\null\hfill $\Box$\par\medskip }
\newcommand{\C}{\ensuremath{\mathcal C}}
\newcommand{\I}{\ensuremath{\mathcal I}}
\newcommand{\K}{\ensuremath{\mathcal K}}
\newcommand{\qed}{\hfill\ensuremath{\square}}
\title{ Lacunary wavelet series on Cantor sets}
\author{ C\'eline Esser$^a$\footnote{Corresponding author. Email:
    celine.esser@uliege.be} and B\'eatrice Vedel$^b$\\\\
$^a$ Universit\'e de Li\`ege,  All\'ee de la D\'ecouverte 12, B-4000
Li\`ege, Belgium \\
$^b$Univ Bretagne Sud, CNRS UMR 6205, LMBA, F-56000 Vannes, France}         
\date{ }
\newsavebox{\fmbox}
\newenvironment{fmpage}[1]
 {\begin{lrbox}{\fmbox}\begin{minipage}{#1}}
 {\end{minipage}\end{lrbox}\fbox{\usebox{\fmbox}}}
\def \Z {\mathbb{Z}}
\def \N {\mathbb{N}}
\def \R {\mathbb{R}}
\def \C {\mathcal{C}}
\def \PP {{\mathbb{P}}}
\def \EE {{\mathbb{E}}}
\def \G {\Gamma}
\begin{document}
\maketitle

\begin{abstract}
We provide a multifractal analysis of lacunary wavelet series on Cantor sets. By introducing a desynchronization between the scales of the wavelets and the scales of the steps of the construction of the Cantor set, we obtain random processes that do not satisfy multifractal formalisms based on the Legendre transform and on the large deviation of wavelet leaders. Subsequently, we show how the computation of the leader large deviation spectra of ``lacunarized'' versions of such wavelet series can detect the failure in the multifractal formalism using only numerical quantities.
\end{abstract}

\noindent  {\bf Keywords :} Multifractal Analysis, Multifractal Formalism, Lacunary wavelet series, Large deviation spectrum,   Cantor sets\\

\noindent {\bf 2010 Mathematics Subject Classification : } 42C40, 28A80, 26A16, 60G17\\

\section{Introduction}

This paper  focuses on lacunary wavelet series defined on fractal sets. In the first part, we perform a multifractal analysis of random lacunary wavelet series defined on the steps of the construction of a Cantor set. The aim is twofold: to determine the multifractal spectrum and to evaluate the validity of the formalism for such simple processes. This leads to the second part of the paper, which focuses on a criterium to detect failures in the numerical estimation of the multifractal spectrum based on computable quantities for lacunary wavelet series on fractal sets.

The constructions presented in this paper give a natural generalisation of the random lacunary wavelet series defined on $[0,1]$  in the seminal paper \cite{Jaffard:00b}, where their multifractal analysis is performed. Let us recall it.

We use an orthonormal wavelet basis on $\R$, given by two functions
$\varphi$ and  $\psi$ in the Schwartz class with infinite numbers of vanishing moments and with the property that
the family
$$
\{ \varphi(\cdot -k) :\, k \in \Z\} \cup \{ 2^{\frac{j}{2}} \psi(2^j
\cdot -k): \, j \in \N, k \in \Z \}
$$
forms an orthonormal basis of $L^2(\R)$, see \cite{LM86}. Therefore, for all $f \in L^2(\R)$, we have the following decomposition 
$$
f= \sum_{k \in \Z} C_k \varphi(\cdot -k) +  \sum_{j \in \N} \sum_{k \in \Z} c_{j,k} \psi(2^j\cdot -k)
$$
where the wavelet coefficients of $f$ are defined by
$$
C_k = \int_{\R} f(x) \varphi(x-k) dx
$$
and
$$
c_{j,k} = 2^{j} \int_{\R} f(x) \psi(2^jx -k)dx.
$$
Note that we do not use the $L^2$ normalisation to avoid a rescaling
in the definition of the wavelet leaders, see Definition
\ref{def:leader} below. 
We introduce the following concise notations using dyadic intervals to index wavelets. For any $j \in \mathbb{N}$, $\Lambda_j$ denotes the set of all dyadic intervals in $[0, 1]$ of scale $j$. Furthermore, if $\lambda=\lambda_{j,k}= [k2^{-j}, (k+1)2^{-j}) \in \Lambda_j$, we write $c_{\lambda}=c_{j,k}$ and $\psi_{\lambda}= \psi_{j,k}=
\psi(2^j\cdot -k)$. 

Note that we could also work with regular wavelets such as the Daubechies wavelets (see \cite{Daubechies:92}), with an easy adaptation of the results as soon as we take a sufficient number of vanishing moments. 

Let us now consider two parameters $\alpha >0$ and $\eta \in (0,1)$. The Lacunary Wavelet Series (denoted by LWS) on $[0,1]$ of parameters $\alpha$ and $\eta$ is the process defined  by 
$$
F_{\alpha,\eta} = \sum_{j \in \N} \sum_{k=0}^{ 2^j - 1 } c_{j,k} \psi_{j,k}
\quad
\text{with} \quad 
c_{j,k} = 2^{-\alpha j} \xi_{j,k}
$$
where $(\xi_{j,k})_{j,k}$ denotes a sequence of independent random Bernoulli variables of parameter $2^{(\eta-1)j}$.  This setup ensures that, on average, at every scale $j$, there are $2^{\eta j}$ non-zero coefficients. The parameter $\eta$ characterizes therefore the lacunarity of the random series at each scale, while the parameter $\alpha$ is  directly related to its uniform H\"older regularity. 
These processes are very simple but turn out to have a rich behaviour in term of pointwise regularity. This can be observed through their multifractal analysis, a concept that allows to obtain a geometric description of the different pointwise regularities within a signal. Let us be more precise. 

In the following definition and in all the text, $\lfloor a \rfloor$ denotes the entire part of a real $a$.

\begin{definition}
Let $x_0 \in \R$ and $h>0$. A locally bounded function $f : \R \to \R$ belongs to $\C^{h}(x_0)$ if there exists $C>0$ and a polynomial $P_{x_0}$ with $\deg P_{x_{0}} < \lfloor h \rfloor$  such that 
$$
\vert f(x)-P_{x_0}(x) \vert \le C \vert x -x_0 \vert^{h}
$$
on a neighborhood of $x_0$. The \emph{pointwise H\"older exponent} of $f$ at $x_0$ is 
$$
h_f(x_0) = \sup \{ h\geq 0 : \, f \in \C^{h}(x_0)\}.
$$
The \emph{iso-H\"older sets} of $f$ are defined for every $h \in [0, +
\infty]$ by 
$$
I_f(h) = \{x_0 \in \R : \, h_f(x_0) = h\}.
$$
\end{definition}

For multifractal functions, whose pointwise H\"older exponent changes at each point, an
interesting information may be not to describe precisely each
iso-H\"older set but  rather to determine the Hausdorff dimension of the set.

\begin{definition}\label{def:spectrum}
The \emph{multifractal spectrum} $\mathcal{D}_{f}$ of a locally bounded function $f$ is the function 
$$
 {\cal{D}}_f : h \in  [0, + \infty] \mapsto \dim_{\mathcal{H}}I_f(h) 
$$
where $\dim_{\mathcal{H}}$ denotes the Hausdorff dimension. 
\end{definition}

In \cite{Jaffard:00b}, Jaffard derived the multifractal spectrum of any LWS on $[0,1]$. Furthermore, a direct corollary of Jaffard's results is the computation of the so-called \emph{leader large deviation spectrum} of an LWS. In essence, the leader large deviation spectrum $\rho_f$ of a function $f$ is defined such that, for every $h > 0$, there are approximately $2^{\rho_f(h)j}$ wavelet leaders of $f$ of order $2^{-hj}$, where the wavelet leaders can be seen as local maxima of wavelet coefficients. We refer to Section \ref{sec:formalisms} for a rigorous definition of the wavelet leaders and of the leader large deviation spectrum $\rho_f$. In general, it is expected that the functions $\mathcal{D}_f$ and $\rho_f$ coincide, though this is not always the case. If the equality $\mathcal{D}_f = \rho_f$ is true, it is said that \emph{the leader large deviation formalism holds}.

\begin{theorem}\cite{Jaffard:00b}
    Let $F_{\alpha, \eta}$ be a LWS of parameters $\alpha>0$ and $\eta \in(0,1)$. Almost surely, one has 
    $$
    \mathcal{D}_{F_{\alpha, \eta}}(h) = \rho_{F_{\alpha, \eta}}(h) = \begin{cases}  \frac{\eta}{\alpha}h & \text{ if } h \in \big[\alpha, \frac{\alpha}{\eta}\big], \\[1ex]
    - \infty & \text{ otherwise. }\end{cases}
    $$
\end{theorem}

The proof relies mainly on a transference theorem obtained on $[0,1]$. This result allows to estimate the Hausdorff dimension of limsup sets of contracted balls whose centers are ``well spread'' on $[0,1]$, see \cite{Jaffard:00b}. This transference principle can be extended to balls with centers in Cantor sets using the General mass transference principle of \cite{Beresn:06}. 
The latter principle enables us to study the multifractal properties of lacunary wavelet series on Cantor sets. We will study two different constructions.  The first one is a classical extension that yields multifractal processes adhering to the multifractal formalisms, see Section \ref{sec:formalisms} for the definitions of multifractal formalisms. Note that the extension of LWS in the context of Gibbs capacities with full support has been provided in \cite{BaSe20}. The second and more interesting one  involves a ``desynchronization'' between the natural scales of the Cantor set and the scales of the wavelets, introducing some redundancy in the wavelet coefficients. This produces examples of remarkably simple stochastic processes whose spectra deviate from the usual straight line that can be observed for the classical LWS, and rendering the formalisms invalid. This desynchronization, feasible only for processes defined on fractal sets rather than entire intervals, suggests that very rich behaviors can emerge when the self-similarity of the underlying set differs from that of the process. It's worth mentioning that a procedure creating apparent redundancy in the wavelet coefficients has been studied in parallel by Barral and Seuret in \cite{BaSe23}, starting from a Gibbs capacity with full support. The difference between their results and ours lies mainly in the support of the non-zero coefficients. The fractal nature of these supports allows us to desynchronize the localization of the non-zero coefficients from the natural corresponding step in the construction of the Cantor set. This results in two different self-similarity properties: one for the Cantor set and one for the wavelet coefficients.  This fractal nature also implies that any finite regularity is achieved on a set of dimension strictly smaller than $1$. \\

Let us delve into more detail about the construction and the results presented in this paper. We denote by $\C(r)$ the symmetric Cantor set with a dissection ratio  $r<\frac{1}{2}$, constructed iteratively as follows. We start with $\C_0 = [0, 1]$. Then, at each step $n$ in the construction, if we have constructed $\C_n$ as a union of $2^n$ closed intervals of length $r^n$, we remove the open middle interval of length $(1-2r)r^n$ from each of these intervals and we define $\C_{n+1}$ as the union of the remaining $2^{n+1}$ closed intervals of length $r^{n+1}$. Finally, we define the Cantor set $\C(r)$ by
\[
\C(r) = \bigcap_{n \in \N} \C_n.
\]
Next, we consider two parameters $\alpha >0$ and $\eta \in
(0,\gamma)$ where $$\gamma = \text{dim}_{\cal H}\, \C(r) = - \frac{1 }{\log_2 r}.$$  The model is given by the
random wavelet series $$F_{\alpha,\eta,r}=  \sum_{j \in \N}\sum_{k=0}^{2^j-1}c_{j,k}\psi_{j,k} \quad \text{with}\quad
c_{j,k}= \begin{cases}
2^{-\alpha j} \xi_{j,k} & \text{ if }  k \in \Gamma_{j}, \\
0 & \text{ otherwise,}
\end{cases}
$$
where
$$
\Gamma_{j} = \big\{ k \in \{0, \dots, 2^j-1\} : \lambda_{j,k} \subset  \C_{n_{j}}\big\}
\quad \text{ with } \quad n_{j}= \left\lfloor
\frac{-j}{\log_2 r}\right\rfloor =\left\lfloor
\gamma j\right\rfloor 
$$
 and 
where  $ (\xi_{j,k})_{j,k}$ denotes a sequence of independent random Bernoulli
variables of parameter $2^{(\eta-\gamma)j}$.  The random wavelet coefficients of scale $j$ are
located on the intervals of $\C_{n_{j}}$ which are of order $2^{j} \sim
r^{n_{j}}$. By construction of the sets $\C_j$, the number of random wavelet coefficients at scale $j$
is approximately given by $2^{\gamma j}$. Consequently, there are on average approximately $2^{\eta j}$ non-zero coefficients at scale $j$. The multifractal study of this LWS on $\C(r)$ is provided in the next proposition, proved in Appendix \ref{sec:LWS}. It demonstrates that the multifractal spectrum of $F_{\alpha,\eta,r}$ follows a straight line with the same slope as the classical LWS series $F_{\alpha,\eta}$ on $[0,1]$, with the only difference being in the support of the multifractal spectrum.

\begin{proposition}\label{thm:LWS}
 Let $F_{\alpha, \eta,r}$ be a LWS of parameters $\alpha>0$ and $\eta \in(0,1)$ on the Cantor set $\C(r)$.   Almost surely, one has
  $$
  \mathcal{D}_{F_{\alpha,\eta,r}}(h)  = \rho_{F_{\alpha,\eta,r}}(h)=\begin{cases}
    \frac{ \eta}{\alpha}h & \text{ if } h \in [\alpha, \frac{\alpha\gamma}{\eta}],
    \\[1.5ex]
    1 & \text{ if } h=+\infty, \\[1ex]
  - \infty & \text{ otherwise. }\end{cases}
  $$
\end{proposition}

Note that, in particular, the maximal finite regularity is obtained on a set with the same Hausdorff dimension as that of the underlying Cantor set.
As mentioned earlier, the proofs are rather classical; nonetheless, they are presented in  Appendix  \ref{sec:LWS} for the completeness of the paper and as an introduction for the more technical model studied in Section \ref{sec:LWSduplicated}. Indeed, determining the multifractal spectrum in this case proves to be very useful for obtaining the lower bound of the multifractal spectrum of the ``duplicated'' lacunary wavelet series that we  now define. \\

We  work on the Cantor set $\C(\frac{1}{4})$, whose Hausdorff dimension is $\frac{1}{2}$. Note that the model could easily be adapted to general (symmetric) Cantor sets $\C(r)$ and to other desynchronizations between the wavelets and the steps of construction of Cantor sets. However, the interesting phenomena that we aim to exhibit already appear in this specific case.

\begin{definition}\label{def:dLWS}
    The duplicated LWS on the symmetric Cantor set $\C(\frac{1}{4})$ is defined for $\alpha>0$ and $\eta\in (0,\frac{3}{4})$ as the random wavelet series
    $$
    F^d_{\alpha,\eta}= \sum_{j \in \N} \sum_{k=0}^{2^j-1} c_{j,k} \psi_{j,k}
    \quad \text{with} \quad 
c_{j,k}= \begin{cases}
2^{-\alpha j} \xi_{j,k} & \text{ if }  k \in \Gamma_{j}, \\
0 & \text{ otherwise,}
\end{cases}
$$
where  
$$
\Gamma_j = \left\{ k \in \{0, \dots, 2^j-1\} : \lambda_{j,k} \subset  \C_{\lfloor \frac{j}{4}\rfloor}\right\}
$$
and 
where  $ (\xi_{j,k})_{j,k}$ denotes a sequence of independent random Bernoulli
variables of parameter $2^{(\eta-\frac{3}{4})j}$. 
\end{definition}

Notice that at step $n$, the set $\C_n$ is formed by $2^n$ intervals of length $2^{-2n}$. Each of these intervals contains $2^{\frac{j}{2}}$ dyadic intervals of scale $j=4n$, while they contain only one dyadic interval of scale $j=2n$. The latter case corresponds to the construction of the lacunary wavelet series $F_{\alpha,\eta,\frac{1}{4}}$: with the natural definition of $F_{\alpha,\eta,\frac{1}{4}}$ described above, one would have considered the set  $\Gamma_{j} =\{ k \in \{0, \dots, 2^j-1\} : \lambda_{j,k} \subset  C_{\lfloor \frac{j}{2}\rfloor} \}$.
By not choosing wavelets adapted to the size of the intervals appearing during the construction of the Cantor set, we introduce a duplication of the possible supports for the random coefficients of $F^d_{\alpha,\eta}$. As we will see, this desynchronization creates a richer multifractal behavior with a ``phase transition'' -- the multifractal spectrum is not a straight line -- and it results  in  an overestimation of the multifractal spectrum of $F^d_{\alpha,\eta}$ by its leader large deviation spectrum $\rho_{F^d_{\alpha,\eta}}$.

The main result of the paper is given by the computation of the
exact multifractal spectrum of the lacunary wavelet series $F^d_{\alpha,\eta}$ together with its large deviation spectrum. Its proof is given in Section \ref{sec:LWSduplicated}.

\begin{theorem}\label{thmDLWS}
 Let $F^d_{\alpha, \eta}$ be a duplicated LWS of parameters $\alpha>0$ and $\eta \in(0,\frac{3}{4})$ on $\C(\frac{1}{4})$.
\begin{enumerate}
\item If $\eta \in [\frac{1}{4},\frac{3}{4})$, then almost surely
$$
{\mathcal{D}}_{F^d_{\alpha, \eta}}(h) = \begin{cases}
 \frac{\eta+\frac{1}{4}}{\alpha}h -\frac{1}{2} & \text{ if } h  \in [\alpha, \frac{\alpha}{\eta+\frac{1}{4}}],\\
1 & \text{ if } h= + \infty,\\[1ex]
- \infty & \text{ otherwise}
\end{cases}
$$
and $$
\rho_{F^d_{\alpha, \eta}}(h) = \begin{cases}
\frac{ \eta}{\alpha}h& \text{ if } h  \in [\alpha, \frac{\alpha}{\eta +\frac{1}{4}}],\\[1.5ex]
1 & \text{ if } h= + \infty,\\[1ex]
- \infty & \text{ otherwise.}
\end{cases}
$$
\item If $\eta \in (0, \frac{1}{4}]$, then almost surely
$$
\mathcal{D}_{F^d_{\alpha, \eta}}(h) = \begin{cases}

 \frac{\eta+\frac{1}{4}}{\alpha}h -\frac{1}{2} & \text{ if } h  \in [ \frac{2\alpha}{4 \eta + 1}, 2 \alpha],\\[1.5ex]
\frac{\eta}{\alpha}h & \text{ if }  h \in [2 \alpha, \frac{\alpha}{2 \eta}],\\[1.5ex]
1 & \text{ if } h= + \infty,\\[1ex]
- \infty & \text{ otherwise}
\end{cases}
$$
and $$
\rho_{F^d_{\alpha, \eta}}(h) = \begin{cases}
\frac{\eta}{\alpha}h & \text{ if } h  \in [\alpha, \frac{\alpha}{2\eta}],\\[1.5ex]
1 & \text{ if } h= + \infty,\\[1ex]
- \infty & \text{ otherwise.}
\end{cases}
$$
\end{enumerate}
\end{theorem}

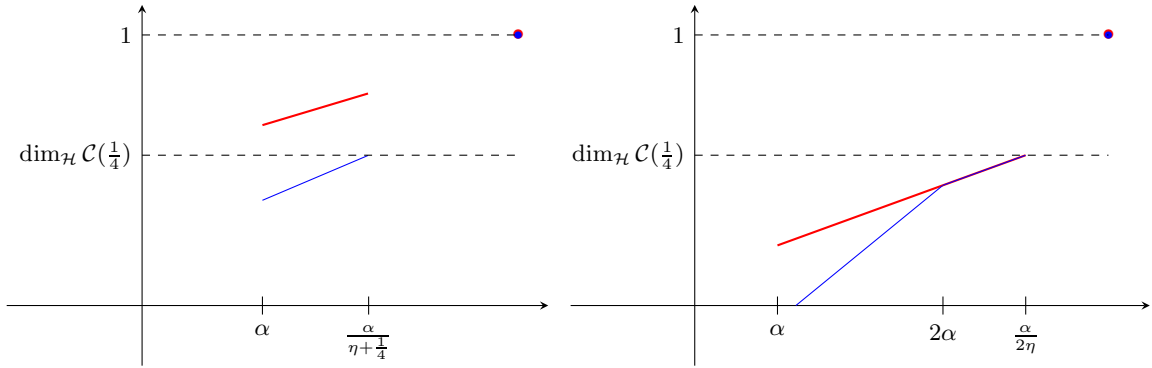
\begin{figure}[ht]
\begin{tikzpicture}
\begin{axis}[
x=20cm,y=4cm,
axis lines=middle,
xmin=-0.09,
xmax=0.27,
ymin=-0.2,
ymax=1,
xtick=\empty,
ytick = \empty,
]
\draw[color=blue, samples=100,domain=0.4:8/17] 
 {plot ({\x -0.32},{(17/8)*(\x) - 0.5})};
 \draw[thick,color=red, samples=100,domain=0.4:8/17] 
 {plot ({\x -0.32},{3*\x/2 })};
\draw[dashed] (0,0.5) -- (0.25,1/2);
\draw  (0,0.5) [left]node {\footnotesize$\dim_{\cal H} \mathcal{C}(\frac{1}{4})$};
\draw  (0.4-0.32,-0.03) -- (0.4-0.32,0.03);
\draw  (0.4-0.32,-0.03) [below]node {\footnotesize$\alpha$};
\draw  (8/17-0.32,-0.03) -- (8/17-0.32,0.03);
\draw  (8/17-0.32,-0.03) [below]node {\footnotesize$\frac{\alpha}{\eta+ \frac{1}{4}}$};
\draw  (0.25,0.9) node[red] {\footnotesize$\bullet$};
\draw  (0.25,0.9) node[blue] {\tiny$\bullet$};
\draw[dashed] (0,0.9) -- (0.25,0.9);
\draw  (0,0.9) [left]node {\footnotesize$1$};
\end{axis}
\hspace{7.5cm}
\begin{axis}[
x=5.5cm,y=4cm,
axis lines=middle,
xmin=-0.3,
xmax=1.1,
ymin=-0.2,
ymax=1,
xtick=\empty,
ytick = \empty,
]
\draw[thick,color=red, samples=100,domain=0.4:1] 
 {plot ({\x -0.2},{\x/2})};
 \draw[color=blue, samples=100,domain=0.8/1.8:0.8] 
 {plot ({\x -0.2},{0.45*\x/0.4 -0.5 })};
  \draw[color=blue, samples=100,domain=0.8:1] 
 {plot ({\x-0.2 },{\x/2})};
 \draw[dashed] (0,0.5) -- (1,1/2);
\draw  (0,0.5) [left]node {\footnotesize$\dim_{\cal H} \mathcal{C}(\frac{1}{4})$};
\draw  (0.4-0.2,-0.03) -- (0.2,0.03);
\draw  (0.2,-0.03) [below]node {\footnotesize$\alpha$};
\draw  (0.6,-0.03) -- (0.6,0.03);
\draw  (0.6,-0.03) [below]node {\footnotesize$2\alpha$};
\draw  (0.8,-0.03) -- (0.8,0.03);
\draw  (0.8,-0.03) [below]node {\footnotesize$\frac{\alpha}{2\eta}$};
\draw  (1,0.9) node[red] {\footnotesize$\bullet$};
\draw  (1,0.9) node[blue] {\tiny$\bullet$};
\draw[dashed] (0,0.9) -- (1,0.9);
\draw  (0,0.9) [left]node {\footnotesize$1$};
\end{axis}
\end{tikzpicture}
\caption{The leader large deviation spectrum (in red)  and  the multifractal spectrum (in blue) of $F^d_{\alpha, \eta}$, with $\alpha
        =0.4$ and $\eta =0.6$ (left), $\eta =0.2$  (right). }
\end{figure}

\begin{remark}\
    \begin{itemize}
  \item In both cases, the Hausdorff dimension of the iso-H\"older set  of $F^d_{\alpha,\eta}$ with maximal finite regularity matches exactly the Hausdorff dimension of the Cantor set $\C(\frac{1}{4})$. 
  \item One could of course replace in the model the coefficients $c_{j,k}=0$
  by $c_{j,k} = 2^{-\gamma j}$ with any exponent $\gamma$ larger than the
  maximal regularity of $F^d_{\alpha,\eta}$ on the Cantor set $\C(\frac{1}{4})$.  This  would yield
  $\mathcal{D}_{f}(\gamma) = \rho_{f}(\gamma) = 1$.  
  \end{itemize}
\end{remark}

Let us examine the role of the lacunarization parameter $\eta$. It can be observed that for each $\eta$, the maximum of the multifractal spectrum $\mathcal{D}_{F^d_{\alpha, \eta}}$ on $[0, + \infty[$ always equals the dimension of the Cantor set $\C(\frac{1}{4})$. Conversely, for $\eta>\frac{1}{4}$,  the maximum of the leader deviation spectrum $\rho_{F^d_{\alpha, \eta}}$ is given by $\frac{\eta}{\eta+ \frac{1}{4}}$, which decreases as the series becomes more lacunar, i.e. as $\eta$ decreases, until the value $\eta = \frac{1}{4}$ is reached. At this point, it reaches the dimension of the Cantor set and does not decrease anymore.

Essentially, it shows that lacunarization enhances the effectiveness of wavelet leaders, in the sense that it allows the Cantor structure to emerge more distinctly. This observation serves as the starting point for the second part of this paper, which represents the initial step towards addressing the general question: given a function $f$ -- signal or image -- analyzed in a wavelet basis, is there a procedure involving exclusively computable quantities such as the leader large deviation spectra on ``lacunarized expansions'' of $f$ that could indicate an overestimation of ${\mathcal{D}}_f$ by $\rho_f$ ? The second part of the paper shows that for \emph{$\alpha$-sparse wavelet series}, as defined just below, computing the maximum of the leader large deviation spectrum of the image of the considered function under a lacunarization operator provides a criterion to detect such an overestimation. To introduce this, we fix a wavelet basis $\psi_{j,k}$ that is sufficiently  regular.

\begin{definition}\
\begin{enumerate}
\item A wavelet series $\sum_{j \in \N}\sum_{k=0}^{2^j-1} c_{j,k}\psi_{j,k}$ is said to be $\alpha$-sparse if $c_{j,k}$ either equals to $2^{-\alpha j}$ or $0$. 
\item For $\eta>0$ and a uniformly H\"older function $f = \sum_{j \in \N}\sum_{k=0}^{2^j-1} c_{j,k}\psi_{j,k}$, we define the lacunarization opertor $L_{\eta}$ by setting
$$
 L_{\eta}(f) = \sum_{j,k} c_{j,k} \xi^{\eta}_{j,k}\psi_{j,k}
$$
where $(\xi^{\eta}_{j,k})_{j,k}$ denotes a sequence of independent random Bernoulli variables of parameter $2^{(\eta-1) j}$.
\end{enumerate}
\end{definition}
Note that the closer $\eta$ is to $0$, the more the function $f$ is lacunarized through the operator $L_\eta$, meaning that the number of non-empty coefficients at each scale get smaller. \\

Particular $\alpha$-sparse wavelet series we will first study can be constructed by considering a decreasing sequence of closed sets $(\I_j)_{j \in \N}$ of $ [0,1]$ satisfying 
$$
\text{dim}_{\mathcal{H}} \Big( \bigcap_{j \in \N}\I_j \Big) > 0. 
$$
For every $\alpha>0$, we naturally associate to the sequence $(\I_j)_{j \in \N}$ a $\alpha$-sparse wavelet series $F^{(\alpha,(\I_j))}$ by specifying its expansion in a fixed wavelet basis in the following way 
$$
F^{(\alpha,(\I_j))} = \sum_{j \in \N}\sum_{k=0}^{2^j-1} c_{j,k} \psi_{j,k}  \quad \text{with}\quad 
c_{j,k}= \begin{cases} 2^{-\alpha j} & \text{ if } \lambda_{j,k} \, \cap \, \I_j \neq \emptyset \\ 0 & \text{ otherwise.} \end{cases}
$$

\begin{remark}
  Lacunary wavelet series on $[0,1]$, or on a Cantor set $\C(r)$, and duplicated lacunary wavelet series can all be expressed as a lacunarization of such a function, where, for all $j \in \N$, 
  \begin{enumerate}
  \item   $\I_j =[0,1]$ for the standard LWS,
   \item $\I_j = \bigcup \big\{ \lambda_{j,k} \subset  \C_{n_j}\big\}$ with $n_j = \lfloor \frac{-j}{\log_2 r}\rfloor$ for the LWS on Cantor set,
  \item $\I_j = \bigcup \big\{\lambda_{j,k} \subset \C_{\lfloor \frac{j}{4} \rfloor}\big\}$ for the duplicated lacunary wavelet series on $\C(\frac{1}{4})$.
  \end{enumerate}
\end{remark}

Note that the nest of sets $\I_j$ implies the strict hierarchy of the wavelets coefficients, that is $c_{\lambda'} \le c_{\lambda}$ as soon as $\lambda'  \subset \lambda$. Furthermore, we set
$$
\G_j = \big\{ \lambda_{j,k} \, :  \lambda_{j,k}\,  \cap\,  \I_j \neq \emptyset  \big\}.
$$
The second major result of the paper is the following.

\begin{theorem}\label{TheoDetect} Fix $\alpha >0$, and let $(\I_j)_{j \in \N}$ be a decreasing sequence of  subsets of $[0,1]$. Define $\I = \bigcap_{j \in \N} \I_j$, $H = \dim_{\mathcal{H}}(\I)$, and  assume that $H >0$. If  $F^{(\alpha,(\I_j))}$ satisfies the leader large deviation formalism, i.e. ${\mathcal{D}}_{F^{(\alpha,(\I_j))}} = \rho_{F^{(\alpha,(\I_j))}}$, then 
\begin{enumerate}
\item the sequence $(\I_j)_{j \in \N}$ satisfy a statistical self-similarity at large scale, that is : for any $\beta, \varepsilon>0$, there exists $J \in \N$ such that for all $j \ge J$, one has 
\begin{equation*}
    2^{j(H-2\varepsilon)} \le \# ND(j,\beta, \varepsilon) \le 2^{j(H +\varepsilon)}
\end{equation*}
where the set $ND(j,\beta, \varepsilon) $ is defined as
$$
ND(j,\beta, \varepsilon) := \big\{ \lambda\in \G_j: \,  2^{j(\beta H -4\max(1,\beta)\varepsilon)} \le  \# \{ \lambda' \subset \lambda : \lambda' \in \G_{\lfloor (1+\beta)j \rfloor} \} \le  2^{j(\beta H +4\max(1,\beta)\varepsilon)} \big\};
$$
\item for every $\eta \in (1-\dim_{\cal{H}}(\I),1]$, one has
$$
\max_{0<\beta<+\infty} \rho_{L_\eta(F^{(\alpha,(I_j))})}(\beta) = \dim_{\cal H}(\I)\, ;
$$
\item for every $\eta \in (0, 1- \dim_{\cal{H}}(\I)]$, one has 
$$
\max_{0<\beta<+\infty} \rho_{L_\eta(F^{(\alpha,(I_j))})}(\beta) \in \{ - \infty, 0\}. 
$$
\end{enumerate}
\end{theorem}

\begin{remark}\
\begin{itemize}
\item Roughly speaking, the sets $ND(j,\beta, \varepsilon)$ consist of the collection of dyadic intervals $\lambda \in \I_j$ that have been duplicated at the scale $\lfloor (1+\beta)j \rfloor$ with an expected rate determined by the Hausdorff dimension of $\I$. The first part of the theorem states that the cardinality of this set provides an estimation of $\dim_{\mathcal{H}}(\I)$ up to  $\varepsilon$. This  is what we mean by ``statistical similarity''.
\item The second part of the theorem states that for such hierarchical signals, a decrease in the maximum value of $\rho$ through the lacunarization process implies that the leader large deviation formalism is not valid.
\end{itemize}
\end{remark}
We derive two corollaries from Theorem \ref{TheoDetect}. Let us recall that the upper box-counting dimension of a set $\I$ is defined by
$$
\dim_{u-box}(\I) := \limsup_{\varepsilon \to 0^+} \frac{\log N(\varepsilon)}{-\log(\varepsilon)}
$$
where $N(\varepsilon)$ is the smallest number of intervals of length $\varepsilon$ which can cover $\I$ (see \cite{Falconer:86} for an introduction to box-counting dimensions). The first result is obtained immediately by contraposing Part 2 of Theorem \ref{TheoDetect}. One of its main interest is that it does not depend anymore on the choice of the wavelet basis. It can particularly be of interest when performing a multifractal analysis of data given on a fractal set, as is the case for urban data for example (see \cite{Len:22}).
 
\begin{corollary}
Let $\I \subset [0,1]$ be a set with strictly positive Hausdorff dimension. Let $(\I_j)_{j \in \N}$ be the decreasing sequence of subsets of $[0,1]$ defined by
$$
\I_j = \Big\{  \left[ 2^{-j}k, 2^{-j}(k+1) \right]\, : \,  \left[ 2^{-j}k, 2^{-j}(k+1) \right] \cap \I \neq \emptyset \Big\}.
$$
If the strictly positive maximal values of $\rho_{L_\eta F^{(\alpha,(\I_j))}}$ on $\R$ are not the same for all $\eta$, then upper-box counting dimension and the Hausdorff dimension of $\I$ do not coincide, i.e.
$$
\dim_H(\I)< \dim_{u-box}(\I).
$$
\end{corollary}

The second consequence of Theorem \ref{TheoDetect} is a procedure to detect an overestimation of the singularity spectrum by the leader large deviation spectra for $\alpha$-sparse wavelet series. It is only based on computations of leader large deviation spectra of a family of functions derived from the original analyzed one. 
Let $\alpha>0$ be fixed. If $f$ is $\alpha$-sparse wavelet series, we ``saturate'' the wavelet series by setting
\begin{equation}\label{eq:sature}
    S(f) = \sum_{j\in \N} \sum_{k=0}^{2^j - 1 } s_{j,k} \psi_{j,k} \, \text{ where } \,  s_{j,k} = \begin{cases}
    2^{-\alpha j} & \text{ if } \sup_{\lambda' \subset \lambda_{j,k}} |c_{\lambda' }| \neq 0\\[1ex]
    0 &\text{ otherwise. } 
\end{cases}
\end{equation}

\begin{corollary}\label{cor:validity}
Let $f$ be a $\alpha$-sparse wavelet series such that 
$$
{\rm Supp } \, \mathcal{D}_f  \subset [h_{min},h_{max}] \cup \{+\infty\}.
$$
for some $0 <h_{min} \le h_{max}<+\infty$.
If $\mathcal{D}_f = \rho_f$ and if $\alpha_0 \in [h_{min},h_{max}] $ is such that
$$
\max_{h_{min}\le \alpha \le h_{max}} \rho_f(\alpha)= \rho_f(\alpha_0),
$$ 
then 
$$
\max_{0<\alpha <+\infty} \rho_{L_{\eta}S(f)}(\alpha) = \rho_f(\alpha_0)
$$
for all $\eta \in (1- \rho_f(\alpha_0),1] $.
\end{corollary}

Consequently, if there exists a lacunarization parameter $\eta$ for which the maximum of the leader large deviation spectrum of function $L_{\eta}S(f)$ is strictly smaller than $\rho_f(\alpha_0)$, it implies that  $\mathcal{D}_f \neq \rho_f$. This corollary also paves the way to a new multifractal formalism, i.e. a new formula to estimate  multifractal spectra. 
This procedure will be further investigated in an upcoming paper.

\section{Wavelet leader based mutlifractal formalisms}\label{sec:formalisms}

\subsection{Legendre spectrum and Large deviation spectrum}

The multifractal spectrum remains an abstract quantity when dealing with digital data, making its computation seemingly impossible {\it a priori}. In \cite{Parisi:85}, Frish and Parisi suggest estimating this spectrum using computable quantities like increments, which offer statistical insights into changes in pointwise regularity. Building upon this concept, numerous numerical estimation techniques, referred to as \emph{multifractal formalisms}, have emerged, as seen in \cite{Jaffard:00,Jaffard:04,Aubry:07,Jaffard:97,Muzy:93,Abry:14,Kleyntssens:18}. These methods all give an upper bound of the multifractal spectrum. In this paper, we will focus on two recent and widely used formalisms, both based on a characterization of the  pointwise H\"older regularity using \emph{wavelet leaders}, see \cite{Jaffard:04b,Jaffard:05,JLA}.

\begin{definition}\label{def:leader} Let $\lambda$ be a dyadic interval and $3 \lambda$ the interval of same center  as $\lambda $ and 3 times wider. If $f$ is a bounded function, the wavelet leader $d_{\lambda}$ of $f$ is defined by
$$
d_{\lambda} = \sup_{\lambda' \subset 3 \lambda} \vert c_{\lambda'} \vert.
$$
\end{definition}

The significance of these quantities lies in their ability to determine the pointwise H\"older regularity of a function $f$ at a point $x_0$, as detailed in \cite{Jaffard:04b}. For $x_0 \in \mathbb{R}$, the notation $\lambda_j(x_0)$ denotes the unique dyadic interval of width $2^{-j}$ containing $x_0$.
 
\begin{theorem}\cite{Jaffard:04b}\label{thm:waveletcharact}
Let $h>0$ and $x_0 \in \R$. Assume that $f$ is a bounded function and that the wavelet has $r$ vanishing moments  with $r > \lfloor h \rfloor +1$. 
\begin{itemize}
\item If $f$ belongs to $\C^{h}(x_0)$, then there exists $C>0$ such that
\begin{equation}\label{leaderholder}
\forall j \ge 0, \quad d_{\lambda_j(x_0)} \le C2^{-hj}.
\end{equation}
\item Conversely, if (\ref{leaderholder}) holds and if $f$ is uniformly H\"older  ({\it i.e.} $f$ belongs to $\C^{\varepsilon}(\R)$ for some $\varepsilon >0$), then $f$ belongs to $C^{h'}(x_0)$ for all $h'<h$. 
\end{itemize}
In particular,  if $f \in \C^{\varepsilon}(\R)$ for some $\varepsilon>0$,  then $$h_f(x_0) = \liminf_{j\to +\infty} \frac{\log d_{\lambda_j(x_0)}}{\log 2^{-j}}.$$
\end{theorem}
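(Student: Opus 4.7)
The plan is to mirror the original Jaffard strategy and treat the three parts in order, with part~(3) following routinely from (1) and (2). Parts~(1) and~(2) are mirror images: (1) estimates wavelet coefficients from pointwise regularity, and (2) reconstructs pointwise regularity from wavelet coefficients. The two structural ingredients used throughout are the vanishing moments of $\psi$ (since the basis is $r$-smooth with $r>[h]+1$) and the fast spatial decay of $\psi$, which makes the wavelet analysis ``almost local'' at each dyadic scale.

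For part~(1), I would fix a dyadic cube $\lambda'=\lambda_{j',k'}$ with $\lambda'\subset 3\lambda_j(x_0)$ and rewrite the wavelet coefficient as
\[
c_{\lambda'}=2^{j'}\int_{\R}\bigl(f(x)-P_{x_0}(x)\bigr)\psi(2^{j'}x-k')\,dx,
\]
which is legitimate because $\psi$ has more than $[h]+1$ vanishing moments and hence annihilates the polynomial $P_{x_0}$. Since $\supp \psi_{\lambda'}$ lies in a fixed dilate of $3\lambda_j(x_0)$, every relevant $x$ satisfies $|x-x_0|\le C\cdot 2^{-j}$; the Hölder estimate $|f(x)-P_{x_0}(x)|\le C|x-x_0|^h$ then yields $|c_{\lambda'}|\le C'2^{-hj}$. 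The finitely many small scales $j$ where the Hölder inequality does not yet apply on $3\lambda_j(x_0)$ are absorbed by the uniform bound on $f$ into the constant, and for wavelets without compact support one splits the integral into dyadic annuli and exploits fast decay. Taking the supremum over $\lambda'\subset 3\lambda_j(x_0)$ gives the leader bound.

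For part~(2), I would associate to each $x$ close to $x_0$ the integer $J$ with $2^{-J}\le|x-x_0|<2^{-J+1}$ and decompose the wavelet sum into three regimes: fine scales $j'>J$ near $x_0$ (meaning $\lambda'\subset 3\lambda_{j'}(x_0)$); fine scales $j'>J$ far from $x_0$; and coarse scales $j'\le J$. For the near fine-scale part, the hypothesis $d_{j'}(x_0)\le C2^{-hj'}$ directly bounds each $|c_{\lambda'}|$ and, since there are $O(1)$ cubes at scale $j'$ in $3\lambda_{j'}(x_0)$, summation over $j'>J$ produces a geometric series dominated by $2^{-hJ}\sim|x-x_0|^h$. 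For the far fine-scale part, the hypothesis $f\in\C^{\varepsilon}(\R)$ gives $|c_{\lambda'}|\le C2^{-\varepsilon j'}$, and the spatial decay of $\psi_{\lambda'}(x)$ away from its centre keeps the total contribution summable and bounded by a power of $|x-x_0|$. For the coarse scales, I would replace $\psi_{\lambda'}(x)$ by its order-$[h]$ Taylor polynomial at $x_0$ and define $P_{x_0}$ as the sum of these Taylor polynomials (its convergence being controlled by the same splitting between near and far cubes); the Taylor remainder contributes $|x-x_0|^{[h]+1}2^{j'([h]+1)}$ which, combined with $|c_{\lambda'}|\le C2^{-hj'}$ near $x_0$ and $|c_{\lambda'}|\le C2^{-\varepsilon j'}$ with wavelet decay far from $x_0$, yields after summation over $j'\le J$ a bound of order $|x-x_0|^{h'}$ for any $h'<h$. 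Putting the three regimes together gives $|f(x)-P_{x_0}(x)|\le C|x-x_0|^{h'}$ for every $h'<h$, so $f\in\C^{h'}(x_0)$ and $h_f(x_0)\ge h$.

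Part~(3) then falls out: applying (1) with every $h'<h_f(x_0)$ shows that $\liminf_{j\to+\infty}\log d_j(x_0)/\log 2^{-j}\ge h_f(x_0)$, and applying (2) with every $h'<\liminf_{j\to+\infty}\log d_j(x_0)/\log 2^{-j}$ gives the reverse inequality. The main obstacle is clearly the bookkeeping in part~(2): the convergence of the polynomial $P_{x_0}$ is not automatic (its coefficients would diverge if one naively wrote the full wavelet series derivative-by-derivative), so the three-regime decomposition must be designed to simultaneously build $P_{x_0}$ and estimate the error; this is also precisely where the uniform regularity $f\in\C^{\varepsilon}(\R)$ is essential, since without a global bound on the wavelet coefficients the far fine-scale sum is uncontrollable and the reconstruction fails.
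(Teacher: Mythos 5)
The paper states this theorem as a citation of Jaffard's result and gives no proof of its own, so your sketch has to stand on its own merits; your parts (1) and (3) follow the standard route and are fine. Part (2), however, has a genuine gap at exactly the regime that carries the theorem: the fine scales $j'>J$ localized near the evaluation point $x$ rather than near $x_0$. With $2^{-J}\le|x-x_0|<2^{-J+1}$, the cubes $\lambda'$ of scale $j'>J$ whose wavelets are not negligible at $x$ sit at distance $\sim 2^{-J}\gg 2^{-j'}$ from $x_0$, so they are \emph{not} contained in $3\lambda_{j'}(x_0)$ and land in your ``far'' group, where you only invoke $|c_{\lambda'}|\le C2^{-\varepsilon j'}$ plus spatial decay of $\psi_{\lambda'}$. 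But for these cubes $\psi_{\lambda'}(x)$ is of order one (no decay gain), and summing $2^{-\varepsilon j'}$ over $j'>J$ yields only $C|x-x_0|^{\varepsilon}$ --- which proves nothing beyond the hypothesis $f\in\C^{\varepsilon}(\R)$ and certainly not $f\in\C^{h'}(x_0)$ for $h'>\varepsilon$.

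The missing idea is that the leader hypothesis must also be used in this regime, and at scale comparable to $J$ rather than at the cube's own scale $j'$: every cube of scale $j'>J$ meeting a $2^{-J+1}$-neighbourhood of $x_0$ is contained in $3\lambda_{J-2}(x_0)$, hence its coefficient is bounded by $d_{J-2}(x_0)\le C2^{-h(J-2)}\le C'|x-x_0|^{h}$. One uses this bound for the scales $J<j'\lesssim \frac{h}{\varepsilon}J$ and the uniform bound $2^{-\varepsilon j'}$ only for $j'\gtrsim \frac{h}{\varepsilon}J$, which gives a total contribution $O\bigl(|x-x_0|^{h}\log(1/|x-x_0|)\bigr)$, i.e.\ $O(|x-x_0|^{h'})$ for every $h'<h$. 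This interplay is precisely why leaders (suprema over \emph{all} subcubes of $3\lambda$), and not merely the coefficients in the cone above $x_0$, appear in the statement, and why the uniform regularity is indispensable: it truncates an otherwise divergent sum in which infinitely many scales each contribute $|x-x_0|^{h}$. A smaller point: as written, your polynomial $P_{x_0}$ is assembled only from the Taylor polynomials of the coarse scales $j'\le J$ and therefore depends on $x$ through $J$; it must instead be the sum over all scales (convergent since $[h]<h$, using the leader bound near $x_0$ and wavelet decay far away), with the fine-scale Taylor terms estimated by the same two-bound argument.
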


The latter characterization motivates the introduction of the following formalism. First, one defines the structure function 
$$
S^{f}_j(p) = 2^{-j} \!\sum_{\lambda \in \Lambda_j, d_\lambda \neq 0}\! (d_{\lambda})^p
$$
for $j \in \N$ et $p \in \R$, 
where $d_{\lambda}$ denote the wavelet leaders of the function $f$
under study. 
The scaling function is then given by
$$
\eta_{f}(p)= \liminf_{j \to +\infty}  \frac{\log S^{f}_j(p)}{\log 2^{-j}} 
$$
and finally, the so-called \emph{leader Legendre spectrum} is defined by 
$$
L_f(h) = \inf_{p \in \R} (1-\eta_{f}(p)+hp).
$$
The formalism consists then in the estimation of the multifractal spectrum via the leader Legendre spectrum: if the equality $\mathcal{D}_f= L_f$ is true, one says that the leader Legendre formalism holds for $f$. 
The properties of the leader Legendre spectrum are recalled in the following proposition (see Proposition 5 of \cite{Jaffard:10a} for example).

\begin{proposition}\label{prop:admissible}
The leader Legendre spectrum $L_f$ is a concave function. If we suppose that there exists $C_1,C_2,A,B$ such that
\begin{equation}\label{encadrement}
\forall j \in \N, \, \forall \lambda \in \Lambda_j, \quad C_1 2^{-Bj} \le d_{\lambda} \le C_2 2^{-Aj}
\end{equation}
and if we denote by $H_{\max} :=  \inf\{ A>0 : \, (\ref{encadrement}) \text{ holds for  some }C_{2}\}$ and $H_{min} := \sup
\{B>0 : \,  (\ref{encadrement}) \text{ holds for some } C_{1} \}, $
 then $L_{f}$
satisfies
\begin{itemize}
\item $0 \le L_f\le 1$ on  $[H_{\min}, H_{\max}]$ and  $L_{f} = -
  \infty$ otherwise, 
\item there exists $H_1$ and $H_2$ such that $L_{f}$ is strictly increasing on $[H_{\min}, H_{1}]$, strictly decreasing on $[H_2, H_{\max}]$ and constant equal to $1$ on $[H_1, H_2]$.
\end{itemize}

\end{proposition}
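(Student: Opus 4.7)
The plan is to split the proposition into three parts: concavity, the effective support, and the detailed shape; the first is immediate while the third relies on a duality argument that is the true core of the result. Concavity of $L_f$ follows at once from the definition: $L_f(h)=\inf_{p\in\R}(1-\eta_f(p)+hp)$ is a pointwise infimum of affine functions of $h$, hence concave with values in $\R\cup\{-\infty\}$.

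For the effective support, I would first transform (\ref{encadrement}) into linear bounds on $\eta_f$. Substituting $d_\lambda\in[C_1 2^{-Bj},C_2 2^{-Aj}]$ into $S^f_j(p)=2^{-j}\sum_{\lambda\in\Lambda_j}d_\lambda^p$ and summing the $2^j$ terms, while distinguishing the sign of $p$ (since $x\mapsto x^p$ reverses monotonicity for $p<0$), gives
\[
H_{\min}p\le\eta_f(p)\le H_{\max}p\ \text{for}\ p\ge 0,\qquad H_{\max}p\le\eta_f(p)\le H_{\min}p\ \text{for}\ p\le 0,
\]
together with $\eta_f(0)=0$. Evaluating the defining infimum at $p=0$ gives $L_f(h)\le 1$, and for $h>H_{\max}$ the lower bound $\eta_f(p)\ge H_{\max}p$ on $p\le 0$ leads to $1-\eta_f(p)+hp\le 1+(h-H_{\max})p\to-\infty$ as $p\to-\infty$, so $L_f(h)=-\infty$; symmetrically for $h<H_{\min}$ by letting $p\to+\infty$. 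For the structural statement, H\"older's inequality applied inside $S^f_j$ shows that $p\mapsto\log S^f_j(p)$ is convex, hence $\eta_f$ is concave. Setting $H_1:=\eta_f'(0^+)$ and $H_2:=\eta_f'(0^-)$, the subdifferential $\partial\eta_f(0)=[H_1,H_2]$ is exactly the set of slopes $h$ for which $p=0$ attains the supremum in $\eta_f^*(h)=\sup_p(\eta_f(p)-hp)$, so $\eta_f^*\equiv 0$ and $L_f\equiv 1$ on $[H_1,H_2]$; strict monotonicity on each flank then follows from the standard correspondence between subdifferentials of $\eta_f$ and those of $\eta_f^*$.

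The genuine obstacle is the lower bound $L_f\ge 0$ throughout $[H_{\min},H_{\max}]$: the linear controls on $\eta_f$ alone do not deliver it, as one can exhibit abstractly concave $\eta$'s compatible with those bounds whose Legendre transforms exceed~$1$ near the boundary. To close this gap I would reinterpret $\eta_f$ via the large-deviation counting profile
\[
\rho(\alpha):=\limsup_{j\to\infty}\frac{1}{j}\log_2\#\{\lambda\in\Lambda_j:d_\lambda\approx 2^{-\alpha j}\},
\]
which satisfies $0\le\rho\le 1$ since $\#\Lambda_j=2^j$. A direct computation of $S^f_j(p)$ yields $\eta_f(p)=\inf_\alpha(1-\rho(\alpha)+\alpha p)$, and biduality then identifies $L_f$ with the concave envelope of $\rho$, from which $0\le L_f\le 1$ follows on the effective domain $[H_{\min},H_{\max}]$.
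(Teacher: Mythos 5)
Your overall architecture is sound, and the first two thirds are essentially correct (note that the paper itself does not prove this proposition but recalls it from \cite{Jaffard:10a}, so I compare you to the standard argument): concavity of $L_f$ as an infimum of affine functions is immediate; substituting (\ref{encadrement}) into $S^f_j(p)$ gives exactly your two-sided linear bounds on $\eta_f$ (with the natural reading of $H_{\min}$, $H_{\max}$ as the optimal upper- and lower-decay exponents, which your bounds implicitly assume), $\eta_f(0)=0$ gives $L_f\le 1$, and letting $p\to\pm\infty$ gives $L_f=-\infty$ outside $[H_{\min},H_{\max}]$. The H\"older argument does yield concavity of $\eta_f$ despite the $\liminf$ (because $\liminf_j(a_j+b_j)\ge\liminf_j a_j+\liminf_j b_j$, and the inequality goes the right way), and then $L_f\equiv 1$ exactly on the superdifferential $[H_1,H_2]$ of $\eta_f$ at $0$, with strict monotonicity on the flanks following from concavity once finiteness on $[H_{\min},H_{\max}]$ is known. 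You are also right that the linear bounds alone cannot give $L_f\ge 0$ on the interior.

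The gap sits precisely in the step you flag as the crux. First, your claim ``$0\le\rho\le 1$ since $\#\Lambda_j=2^j$'' is false on the lower side: $\rho(\alpha)=-\infty$ at any exponent not realized by the leaders (e.g.\ when the leaders take only the two values $2^{-\alpha j}$ and $2^{-\beta j}$), so positivity of the concave envelope of $\rho$ on all of $[H_{\min},H_{\max}]$ does not follow from cardinality; what is actually needed -- and never enters your argument -- is the \emph{optimality} of $H_{\min}$ and $H_{\max}$, which forces leaders with exponents arbitrarily close to each endpoint at infinitely many scales, hence $\rho\ge 0$ near the endpoints. Second, the identity $\eta_f(p)=\inf_\alpha(1-\rho(\alpha)+\alpha p)$ is not a ``direct computation'': the inequality $\eta_f(p)\ge\inf_\alpha(1-\rho(\alpha)+\alpha p)$ requires a covering/uniformity argument in the $\lim_{\varepsilon\to 0}\limsup_j$ structure of $\rho$, and it is precisely the nontrivial concave-hull theorem of \cite{Bastin:14b,Esser:17} that this paper cites; invoking it as a black box makes your key step rest on a result at least as deep as the one being proved. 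The repair is much more elementary and bypasses $\rho$ entirely: by optimality of $H_{\min}$, for every $\varepsilon>0$ there are infinitely many $j$ at which some leader satisfies $d_\lambda\ge 2^{-(H_{\min}+\varepsilon)j}$; keeping only that one term in $S^f_j(p)$ gives, for $p\ge 0$, $\eta_f(p)\le 1+(H_{\min}+\varepsilon)p$, hence $\eta_f(p)\le 1+H_{\min}p$, and symmetrically $\eta_f(p)\le 1+H_{\max}p$ for $p\le 0$. Then for $h\in[H_{\min},H_{\max}]$ one gets $1-\eta_f(p)+hp\ge(h-H_{\min})p\ge 0$ if $p\ge 0$ and $1-\eta_f(p)+hp\ge(h-H_{\max})p\ge 0$ if $p\le 0$, i.e.\ $L_f(h)\ge 0$, which also supplies the finiteness needed for your monotonicity argument in part 2.
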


Since $L_f$ is a concave function,  the leader Legendre formalism can only hold for concave multifractal spectrum. To address this problem, another formalism based on large deviation estimates of wavelet leaders has been derived, see \cite{Bastin:16,Esser:17}.

\begin{definition}\label{def:largedeviation} 
The \emph{leader large deviation spectrum}  of $f$ is defined for every $h \ge 0$ by
\[
\rho_f(h)= \lim_{\varepsilon \to 0^+} \limsup_{j \to +\infty}
\frac{ \log \# \{ \lambda \in \Lambda_j : \, 2^{-(h+ \varepsilon)j}
  \leq d_{\lambda} < 2^{-(h-\varepsilon)j}\} }{\log 2^j}
\]
and for $h=+\infty$ by
\[
\rho_f(+\infty) = \lim_{A \to + \infty} \limsup_{j \to +\infty}
\frac{\log \# \{ \lambda \in \Lambda_j : \, d_{\lambda}< 2^{-Aj}\}}{\log 2^j}.
\]
\end{definition}

We say that the leader large deviation formalism is satisfied for a function $f$ if $f$ satisfies the equality $\mathcal{D}_f = \rho_f$. Note that the leader large deviation spectrum is an upper semi-continuous function on $[0, +\infty)$, and its maximum is equal to $1$. Furthermore, it can be equivalently defined using the so-called \emph{restricted wavelet leaders $e_\lambda$} of $f$, defined by
$$
e_{\lambda} = \sup_{\lambda' \subset  \lambda} \vert c_{\lambda'} \vert.
$$
If $f$ is a uniformly H\"older function, a connection between the leader large deviation spectrum $\rho_f$, the leader Legendre spectrum $L_f$, and the multifractal spectrum $\mathcal{D}_f$ can be established.

\begin{proposition}\label{prop:comparison}\cite{Esser:17}
If $f$ is uniformly H\"older and if $\rho_f = - \infty$ 
outside a compact set, then
\[
\mathcal{D}_{f}\le \rho_f \le L_f.
\]
In addition,  $\rho_f = L_f$ if and only if $\rho_f$ is concave.
\end{proposition}

This inequality shows that in general, the leader large deviation spectrum is more accurate than the leader Legendre spectrum for estimating the multifractal spectrum. 
Finally, let us define the increasing wavelet leader profile as done in \cite{Bastin:16}.

\begin{definition}\label{def:rhocum}
    The increasing wavelet leader profile of a bounded function $f$ is  defined for every $h \in [ 0,+\infty )$  by 
    $$
    \rho_{{\rm cum},f}(h)= \lim_{\varepsilon \to 0^+} \limsup_{j \to+\infty} \frac{\log \# \big\{ \lambda \in \Lambda_j :  d_{\lambda} \ge 2^{-j(h+\varepsilon)}\big\}}{\log 2 ^j} .
    $$
  \end{definition}
The increasing wavelet leaders profile has the advantage to be robust to the change of the wavelet basis, which is not the case for the  leader large deviation spectrum. It is an increasing an right-continuous function on $[ 0,+\infty ) $ that satisfies 
\begin{equation}\label{eq:rhocum}
    \rho_{{\rm cum},f}(h) = \sup_{h' \le h} \rho_f(h') \quad \forall h \in [0,+\infty),
\end{equation}
see \cite{Bastin:16}. Moreover, as for the leader large deviation spectrum, the increasing leader profile of $f$ can be equivalently be defined using its restricted wavelet leaders.

\subsection{Validity of the formalisms}

In general, a formalism is a method based on numerically computable quantities that provides an upper bound for the multifractal spectrum of a given function. If the inequality turns out to be an equality for a specific function $f$, one says that the formalism is satisfied for $f$. 
In particular, as explained in the previous subsection, a  function $f$ satisfies the leader Legendre formalism if ${\mathcal{D}}_f = L_f$, and it satisfies the leader large deviation formalism if ${\mathcal{D}}_f = \rho_f$. Clearly, functions or processes that do not satisfy to the formalism based on the leader large deviation spectrum also do not satisfy to the formalism based on the leader Legendre spectrum.

Many counterexamples to the leader Legendre and/or the leader large deviation formalisms have been  identified, in particular for measures (that can be identified with wavelet series), see e.g. \cite{Nasr:02,Testud:06,Shmerkin05,Feng05,BaSe23}. Recently, Barral and Seuret have studied the generic validity of the leader Legendre formalism, and in particular have constructed function spaces in which the latter formalism is generically not valid, see \cite{BaSeFPI,BaSeFPII}.\\

The main result of this paper, Theorem \ref{thmDLWS}, presents a new counterexample based on the principle of ``duplication'' of the wavelet coefficients. Before studying in details this result, we illustrate in this brief subsection how duplicating wavelet coefficients can influence the multifractal spectrum of a function through a toy-example. The construction strategy begins with any function $f$ that satisfies the formalism, and involves creating a new wavelet series by  sticking together several copies of the wavelet leaders of $f$. While the H\"older regularity of this new function remains controlled by the reguarity of $f$, its leader large deviation spectrum can be altered arbitrarily close to $1$.

Let $f$ be any uniformly H\"older function such that 
$
\rho_{f} = \mathcal{D}_{f}.
$
We denote by $c_{\lambda}$ its wavelet coefficients and by
$d_{\lambda}$ its wavelet leaders.  For every real $m>1$, we consider the wavelet
series $f_{m}$ defined by
\begin{equation}\label{eq:gm}
f_{m} = \sum_{j \in \N}\sum_{k=0}^{2^{j}-1} C^{m}_{j,k} \psi_{j,k} \quad \text{with} \quad 
C^{m}_{j,k} = \sup_{\lambda' \subset \lambda^{\lfloor
\frac{j}{m} \rfloor}_{j,k}}|c_{\lambda'}|,
\end{equation}
where $\lambda^{\lfloor \frac{j}{m} \rfloor}_{j,k}$ denotes the unique dyadic interval of scale $\lfloor
\frac{j}{m} \rfloor$ that contains $\lambda_{j,k}$. Clearly, $f_{m}$ still
satisfies a uniform H\"older condition and the wavelet series defining
it is convergent. Additionally, the sequence of wavelet coefficients
of $f_{m}$ is hierarchical, implying that its wavelet leaders, denoted here as $D^{m}_{j,k}$, are simply
given by $D^{m}_{j,k} =d_{\lambda^{\lfloor \frac{j}{m} \rfloor}_{j,k}}$.

\begin{proposition}\label{prop:gm}
Let $f$ be any uniformy H\"older function such that 
$
\rho_{f} = \mathcal{D}_{f},
$
fix $m>1$ and let  $f_{m}$ be the function defined by \eqref{eq:gm}.
For every $h \ge 0$, one has
$$\rho_{f_{m}}(h) = \frac{m -1 +\mathcal{D}_{f_{m}}(h) }{m}.$$
\end{proposition}

\begin{proof}
If a  wavelet leader of $f$ of scale $\lfloor \frac{j}{m}
  \rfloor$ of $f$ is of order $2^{-h j}$, it generates $2^{j- \lfloor \frac{j}{m}
  \rfloor}$ wavelet leaders of $f_m$ of scale $j$ with the same order. Hence
one has
$$
\# \big\{\lambda \in \Lambda_{j} : 2^{-(h + \varepsilon)j} \leq
  D^{m}_{\lambda} < 2^{-(h - \varepsilon)j}\big\}\\
 = 2^{j- \lfloor \frac{j}{m}   \rfloor} \# \big\{\tilde{\lambda} \in
      \Lambda_{\lfloor \frac{j}{m}   \rfloor} : 2^{-\frac{(h m + \varepsilon m )j}{m}} \leq
  d_{\tilde{\lambda}} < 2^{-\frac{(h m - \varepsilon m )j}{m}}\big\}
$$
and consequently,
$$\rho_{f_{m}}(h) 
 =  1- \frac{1}{m}+ \frac{\rho_{f}(hm)}{m} 
 =  \frac{m-1 +\mathcal{D}_{f}(hm) }{m}
$$
since by assumption, $\rho_{f}=\mathcal{D}_{f}$. 
To conclude, we need to show that $\mathcal{D}_{f_{m}}(h) =
\mathcal{D}_{f}(hm)$. For every $x \in [0,1]$, one has $D^{m}_{\lambda_{j}(x)} = d_{\lfloor \frac{j}{m} 
  \rfloor}(x)$, leading to
$$
\liminf_{j \to + \infty} \frac{\log  D^{m}_{\lambda_{j}(x)} }{\log 2^{-j}} =\frac{1}{m} \liminf_{j \to + \infty} \frac{\log  d_{\lfloor \frac{j}{m} 
  \rfloor}(x)}{\log 2^{-\lfloor \frac{j}{m}   \rfloor}}.
$$
Using  the wavelet characterization of the
H\"older exponent given in Theorem  \ref{thm:waveletcharact}, we
obtain 
 $ h_{f_{m}}(x) =
\frac{h_{f}(x)}{m}$, hence the conclusion. 
\end{proof}

This approach offers numerous counterexamples to formalisms, including those involving so-called \emph{H\"older-homogeneous functions}. Let us briefly recall this notion, see \cite{BDJS13, Kae:13} for more precisions.  Given a nonempty open subset $\Omega \subset \mathbb{R}$, the \emph{$\Omega$-local multifractal spectrum} of a function $f$ is defined as $$\mathcal{D}_{f}^{\Omega}(h) = \dim_{\mathcal{H}}(I_f(h) \cap \Omega).$$ A function $f$ is called \emph{H\"older-homogeneous} if its $\Omega$-local multifractal spectrum is independent of $\Omega$. The existence of H\"older-homogeneous counterexamples to multifractal formalisms was an open question raised in \cite{Jaffard:10a}. Indeed, the homogeneity of a function is often regarded as a means to ensure the validity of the leader Legendre  formalism \cite{Seuret:13}. One says that a function $L$ is \emph{an admissible leader Legendre spectrum} if it  satisfies the conditions of Proposition
\ref{prop:admissible}. Given such an admissible leader Legendre spectrum $L$, constructions of functions $f$ such that $L_f=L$ and which satisfy the leader Legendre formalism (and hence the leader large deviation formalism) have been proposed by Jaffard in \cite{Jaffard:92}, and more recently by Coiffard, Melot, and Willer in \cite{Coiffard:14}. It is easy to verify that these constructions are H\"older-homogeneous. Starting with such a function $f$, the procedure described in this subsection yields a family of functions $f_m$ defined by \eqref{eq:gm}, which are still H\"older-homogeneous but possess a leader Legendre spectrum that differs from their multifractal spectrum.

\begin{corollary} Let $L$ be an admissible leader Legendre spectrum whose
  support is not reduced to a single point and such that
  $L(H_{min})>0$ and $L(H_{max}) >0$. Then, there exists a H\"older-homogeneous 
  function  $g$ such that   $L_{g} = L$ and 
$$
\mathcal{D}_g \neq L_g .
$$
\end{corollary}
\begin{proof}  
Let us fix $m$ such that $1 < m \le
  \frac{1}{1-\min(L(H_{min}), L(H_{max}))}$. It is direct to show that the function
  $\widetilde{L}=m L +1-m$ is also an admissible leader Legendre spectrum. Consequently, the function   $\widetilde{L} (\frac{\cdot}{m})$ does as well. Thus, there exists a H\"older-homogeneous function $f$ which satisfies $\mathcal{D}_{f} =
  \widetilde{L} (\frac{\cdot}{m})$. Propositions \ref{prop:comparison} and \ref{prop:gm} imply then
  that the associated function $f_m$ constructed
  in \eqref{eq:gm} is a H\"older-homogeneous function with
  $L_{f_m}= L$ and $\mathcal{D}_{f_m} = m L +1-m$.   
\end{proof}

\section{Duplicated LWS : Proof of Theorem \ref{thmDLWS}}\label{sec:LWSduplicated}

This section is dedicated to proving the first main result of the paper, namely Theorem \ref{thmDLWS}. It  will be divided into several results that collectively lead to the proof of this theorem.

Let $F^d_{\alpha,\eta}$ be the duplicated LWS of parameters $\alpha>0$ and $\eta \in (0, \frac{3}{4})$ on the Cantor set $\C(\frac{1}{4})$ as introduced in Definition \ref{def:dLWS}. In order to simplify the notations in this section, we set $\mathcal{K}=\C(\frac{1}{4})$. We will also identify the set of indices
$$
\Gamma_j = \left\{ k \in \{0, \dots, 2^j-1\} : \lambda_{j,k} \subset \C_{\lfloor \frac{j}{4}\rfloor}\right\}
$$
with  the subset of $\Lambda_j$ defined by
$$
\Big\{\lambda \in \Lambda_j : \lambda \subset \C_{\lfloor \frac{j}{4}\rfloor}\Big\}.
$$
Let us recall that  $\C_n$ is formed by $2^n$ intervals of length $2^{-2n}$.  If $n =\lfloor \frac{j}{4}\rfloor$, each of these intervals contains $2^{2n+ (j \, {\rm mod }\, 4)}$ dyadic subintervals of scale $j$. Hence
\begin{equation}\label{eq:cardGamma}
    \# \Gamma_j = 2^n2^{2n+( j \, {\rm  mod } \, 4)} = 2^{3n+ (j \text{ mod } 4)} = 2^{\frac{3}{4}j}2^{\frac{j \, {\rm  mod }\, 4}{4}}. 
\end{equation}

\begin{remark}
At first sight, the model studied here may appear similar to the one defined via Equation~\eqref{eq:gm}, as the chosen scales $j=4n$ are multiples of the natural ones $2n$. However, a notable distinction lies in the duplication of the supports of the random coefficients, rather than the non-zero coefficients themselves. Another difference lies in the fact that we do not work directly on the wavelet leaders, which is more natural, and we introduce randomness. Note that for the specific case of a classical LWS on $\mathcal{K}$, the procedure of  Equation~\eqref{eq:gm} with $m=2$ would result in a multifractal spectrum given by $2\eta \frac{h}{\alpha}  \mathbf{1}_{[\frac{\alpha}{2}, \frac{\alpha}{4\eta}]}(h)$. 
\end{remark}

\subsection{Estimation of the number of non-zero coefficients}

Before delving into the details of the proof of Theorem \ref{thmDLWS}, let us introduce some notations and  provide some comments regarding the number of non-zero or random coefficients of $F^d_{\alpha,\eta}$, all summarized in the following Lemmas.

\begin{lemma}\label{lem:P_j}
For every $j \ge 0$,  let $A_j$ be the random subset of $\Gamma_j$ defined as
$$
A_j = \{ \lambda \in \Gamma_j : \, c_{\lambda} = 2^{-\alpha j} \}
$$
representing the positions of the non-zero coefficients of  $F^d_{\alpha,\eta}$. 
Then, one has
 $$
\EE[\# A_{j}] = 2^{\eta j} 2^{\frac{j\, {\rm  mod }\, 4}{4}}.
$$
\end{lemma}

\begin{proof}
It is direct using \eqref{eq:cardGamma}, since there are $\# \Gamma_j$ random coefficients at scale $j$ that are non-zero with a probability of $2^{(\eta-\frac{3}{4})j}$.
\end{proof}

One could then expect that the  leader large
deviation spectrum  of  $F^d_{\alpha,\eta}$ evaluated at $\alpha$ would be equal to $\eta$. This is precisely the result we will obtain in Theorem  \ref{thmDLWS}.  

\begin{lemma}\label{lem:R_j}
 For every $j \ge 0$,  let $R_j$ be the  subset of $\Gamma_j$  defined as 
$$R_j = \{ \lambda \in \Gamma_j :  \lambda \cap \K \neq \emptyset
\} .$$
 Then, one has
$$
\# R_j = 2^{\frac{j}{2}} 2^{\frac{j \, {\rm  mod }\, 4}{2}}.$$
\end{lemma}

\begin{proof}
Note that if $j=4n$, the dyadic intervals of $\Gamma_j\setminus R_j$ are precisely the intervals that will be removed in the subsequent steps of the construction of the Cantor set $\K$, up to step $2n$, as the set $\C_{2n}$ contains dyadic intervals of length $2^{-j}$. In particular, we have $$R_j = \big\{ \lambda \in \Lambda_{j} : \lambda \cap \C_{2n}\neq \emptyset\big\}$$ and consequently,
$
\# R_j = 2^{\frac{j}{2}} =2^{2n}.$
If $j \in \{4n+1, 4n+2, 4n+3 \}$, then each dyadic interval of scale $4n$ that will be removed give birth to $2^{j \, {\rm  mod }\, 4}$ subintervals of $R_j$. The conclusion follows. 
    \end{proof}

Note that even though their support will be removed in the construction of the Cantor set $\K$, the coefficients associated with intervals of $\Gamma_j \setminus R_{j}$ still influence the wavelet leaders of points in $\K$ and consequently, affect the multifractal spectrum of $F^d_{\alpha,\eta}$. This will become evident in the proof of Proposition \ref{prop_sup} below.

\begin{remark}\label{rem:R_j}
Consider $\lambda \in R_j$ with $j= 4n$. If the wavelet leader $d_\lambda$ of $F^d_{\alpha,\eta}$ is equal to $2^{-hj}$ for some $h >\alpha$, it implies the existence of a non-zero wavelet coefficient $c_{\lambda_{0}} = 2^{-\alpha j_{0}} = 2^{-hj}$ of scale $j_{0}= \frac{h}{\alpha}j$.
\begin{itemize}
    \item If $\lambda_0 \subset \lambda$ is a dyadic interval of scale $j_{0} \leq 2j$, then $\lambda_0 \in \Gamma_{j_0}$.  Note that 
    $$
    \#\{\lambda_0 \subset \lambda : \lambda_{0} \in \Gamma_{j_{0}}\} = 2^{j_{0}-j}.
    $$
    \item If we now consider the scale $j_0=2j+4$, only the dyadic subintervals of $\lambda$ of scale $j_{0}$ included in the first and the last quarter of $\lambda$ remain in $\Gamma_{j_0}$. Consequently,
    $$
    \#\{\lambda_0 \subset \lambda : \lambda_{0} \in \Gamma_{j_{0}}\} = 2^{j+3} < 2^{j_{0}-j}.
    $$
\end{itemize}
This explains the different behaviors in the computation of the multifractal spectrum, depending on the relative position of $h$ with respect to $2\alpha$. Note that the second case corresponding to $h \geq 2\alpha$ will never occur if the series is not too lacunar, that is if $\eta \geq \frac{1}{4}$. In this case, the maximal regularity of $F^d_{\alpha,\eta}$  at the points of the Cantor set will be smaller than $2\alpha$, as indicated in Lemma \ref{lem:maximal_regularity}.
\end{remark}

Lemma \ref{lem:R_j} directly implies the following estimation of the number of non-zero coefficients associated with dyadic intervals of $R_j$.

\begin{lemma}\label{lem:G_j}
 For every $j \ge 0$,  let $B_{j}$ be  the random
subset of $R_{j}$ defined by 
$$
B_j = A_j \cap R_j =  \{ \lambda \in R_j : c_{\lambda} = 2^{-\alpha j}\}.
$$
 Then, one has
$$
\EE[\# B_{j}] = 2^{(\eta-\frac{1}{4}) j}2^{\frac{j \, {\rm  mod }\, 4}{2}}. 
$$
\end{lemma}

Here, we observe that if $\eta < \frac{1}{4}$, there will be very few wavelet coefficients of order $2^{-\alpha j}$ corresponding to dyadic intervals of $R_j$ at every scale $j$. More precisely, the supremum of the number of non-zero coefficients at scale $j$ whose support intersects $\K$ will be almost surely bounded in $j$, as proven in Lemma \ref{lem:cardFj} below. In particular, the regularity $\alpha$ might not be attained. This is further confirmed by Lemma \ref{lem:minimal_regularity}, which demonstrates that in this case, the minimal regularity is $\frac{2\alpha}{4\eta +1}$.

The computation of the expectations of $A_{j}$ and $B_{j}$ provided in Lemmas \ref{lem:P_j} and \ref{lem:G_j}, together with the Chebyshev inequality combined with the Borel-Cantelli lemma, directly yields the following lemma.

\begin{lemma}\label{lem:cardFj}
Almost surely, for every $\varepsilon >0$, there is $J \in \N$ such
that
$$
2^{( \eta - \varepsilon) j} \le \# A_{j} \leq 2^{( \eta + \varepsilon) j} \quad \text{ and } \quad 2^{( \eta -\frac{1}{4} - \varepsilon) j} \le \#  B_{j} \leq 2^{( \eta -\frac{1}{4} + \varepsilon) j} 
$$
for every $j \ge J$.\end{lemma}

Let us end this introduction to our model  by providing the following  concentration lemma. It states that the
non-zero coefficients are well distributed and is useful to obtain the leader large deviation spectrum of $F^d_{\alpha, \eta}$.

\begin{lemma}\label{lem:concentration}
Almost surely, for every $\varepsilon>0$, there are infinitely many
scales $j$ such that every interval of length $ 2^{- (\eta +
  \frac{1}{4}-\varepsilon)j}$ centered on dyadic numbers 
contains at most $2^{2\varepsilon j}$ non-zero
coefficients of scale $j$. 
\end{lemma}

\begin{proof}
Let us fix $\varepsilon >0$. For every dyadic interval $\lambda
\in \Lambda_{j}$, let us denote by $\lambda^{j-b}$ the dyadic interval
of scale $j-b$ that contains $\lambda$.  Remark that the random
variable that counts the number of non-zero coefficients of scale $j$ in a
interval of length $2^{-(\eta + \frac{1}{4}-\varepsilon)j}$ centered on a
dyadic interval of scale $j$ 
follows a binomial law Bin$(m,p)$ of parameters $m \leq 2^{(\frac{3}{4}- \eta+\varepsilon
  )j} $ and $p = 2^{(\eta-\frac{3}{4})j}$, so that its expectation is smaller
than $2^{\varepsilon j}$. 
Let $\Omega_{j}$ denote the event
``there is a dyadic interval $\lambda \in \Lambda_{j}$ such that for
all $b \in \{0, \dots, N-1\}$, the
interval of length $2^{-(\eta + \frac{1}{4}-\varepsilon)(j-b)}$ centered on $\lambda^{j-b}$ contains
more than $2^{2\varepsilon (j-b)}$ non-zero coefficients''. Markov
inequality leads to 
\begin{eqnarray*}
 \PP(\Omega_{j})& \le & \sum_{\lambda \in \Lambda_{j}} \prod_{b=0}^{N-1}\PP \big(\text{$\lambda^{j-b}$ contains
more than $2^{2\varepsilon (j-b)}$ non-zero coefficients}\big)\\
& \leq &  \sum_{\lambda \in \Lambda_{j}} \prod_{b=0}^{N-1}
         2^{-\varepsilon (j-2b)} \\
& \leq & 2^{j(1-\varepsilon N)} 2^{2\varepsilon N^2}
\end{eqnarray*}
which is the general term of a convergent series if $N$ is large
enough. 
\end{proof}

\subsection{Computation of  the multifractal spectrum of  the duplicated LWS}

Let us start  by examining the range of possible values for the Hölder exponent of points in the Cantor set. Obviously, since all the wavelet coefficients of scale $j$ are smaller than or equal to $2^{-\alpha j}$, the minimal regularity is greater than or equal to $\alpha$.
First, we demonstrate that in the highly lacunar case where $\eta <\frac{1}{4}$, any regularity $\beta <   \frac{2 \alpha}{4 \eta +1}$ is not observed, implying in particular the absence of points with regularity $\alpha$. 
It is due to the significant lacunarity of the series for $\eta <\frac{1}{4}$, which implies that the probability of having infinitely many intervals $\lambda$ intersecting the Cantor set with $c_{\lambda}\neq 0$   is zero.  
Furthermore, as we will prove in the subsequent lemma, with probability one, it is necessary to descend at least
  $ \frac{2 \alpha}{4 \eta +1} \log_2 \vert \lambda \vert$ scales deeper before encountering a non-zero coefficient on a  $\lambda' \subset \lambda$. 

\begin{lemma}\label{lem:minimal_regularity}
 Let $\eta<\frac{1}{4}$. Almost surely, for all $x \in \K$ one has $h_{F^d_{\alpha,\eta}}(x) \ge  \frac{2 \alpha}{4 \eta +1}.$ 
 \end{lemma}

\begin{proof}
Let us fix $h \in (\alpha,
\frac{2 \alpha}{4 \eta +1})$ and  let $\Omega_j$ denote the event 
$$
\Omega_{j} = \big\{ \exists \lambda \in R_j \text{ such that } \vert d_{\lambda_j}
\vert \ge 2^{-hj}\big\}
$$
where $R_j =\{ \lambda \in \Gamma_j :  \lambda \cap \K \neq \emptyset
\} $. For a given dyadic interval $\lambda$, there are at  $3 \cdot 2^{\ell - j}$ dyadic intervals $\lambda'$ of scale $\ell>j$ included in $3\lambda$. Furthermore, the probability that such a dyadic interval is non-zero is either $2^{(\eta-\frac{3}{4}) \ell}$ or $0$. Then, by Lemma \ref{lem:R_j}, we obtain
\begin{eqnarray*}
\PP(\Omega_j) & \le & \sum_{\lambda \in R_{j}} \PP \left( \exists \lambda'
                 \subset  3 \lambda \text{ such that }\lambda' \in \Gamma_{l} \text{
                 with }
                  j \le \ell \le \lfloor \frac{h}{\alpha}j \rfloor \text{ and  } c_{\lambda'} =
                 2^{-\alpha \ell }\right) \\
& \le & \sum_{\lambda \in R_{j}} \sum_{ j \le \ell \le \lfloor \frac{h}{\alpha}j  \rfloor } 3 \cdot 2^{\ell - j}
        2^{(\eta-\frac{3}{4}) \ell} \\
& \leq & 3 \cdot 2^{-\frac{j}{2}}  \sum_{ j \le \ell \le \lfloor \frac{h}{\alpha}j \rfloor } 
        2^{(\eta+\frac{1}{4}) \ell} \\
& \le & 3    \frac{ h}{\alpha}j 2^{-\frac{j}{2}} 2^{(\eta+\frac{1}{4})
        \frac{h}{\alpha}j}
\end{eqnarray*}
which is the general term of a convergent
series since $h < \frac{2 \alpha}{4 \eta +1}$. The Borel-Cantelli Lemma implies then that almost surely, one has
$$
\vert d_{\lambda_j} \vert \leq 2^{-hj} \quad \forall \lambda \in R_j
$$
for every $j$ large enough. For every $x \in \K$, one has $\lambda_j(x) \in R_j$ for all $j$ and the wavelet characterization of the pointwise H\"older exponent recalled in Theorem \ref{thm:waveletcharact} gives then $h_{F^d_{\alpha, \eta}} (x) \geq h$. In order to get the conclusion, one takes an increasing sequence 
 $(h_{n})_{n \in \N}$ of $(\alpha,
\frac{2 \alpha}{4 \eta +1})$ that converges to $\frac{2 \alpha}{4 \eta +1}$ to get that almost surely, for every $x \in \K$, $h_{F^d_{\alpha, \eta}} (x) \geq h_n$ for every $n \in \N$, hence the conclusion.
\end{proof}

An upper bound for the maximal regularity of the duplicated LWS is established in the following lemma, depending on whether $\eta\leq \frac{1}{4}$ or not. The bound's optimality will be demonstrated later.

\begin{lemma}\label{lem:maximal_regularity}
  Almost surely,
  there is $J \in \N$ such
  that
  $$
  d_{\lambda} \geq  \sup_{\lambda' \subset  \lambda} |c_{\lambda'}| \geq
  \begin{cases}
 2^{-\frac{\alpha}{\eta+\frac{1}{4}}(j + \log_{2}j)} & \text { if } \eta \in (\frac{1}{4},\frac{3}{4}),\\[1.5ex]
   2^{-\frac{\alpha}{2\eta}(j + \log_{2}j)} & \text{ if } \eta \in (0,\frac{1}{4}].
    \end{cases} 
$$
for every $\lambda \in R_{j}$ with  $j \geq J$. In particular, almost
surely for all $x \in \K$, one has 
$$h_{F^d_{\alpha,\eta}}(x) \leq
\begin{cases}
    \frac{\alpha}{\eta+\frac{1}{4}} & \text{ if } \eta \in (\frac{1}{4},\frac{3}{4}) ,\\[1.5ex]
   \frac{\alpha}{2\eta} & \text { if }\eta \in (0,\frac{1}{4}].
\end{cases}
$$
\end{lemma}
\begin{proof} We begin with the case $\eta \in (\frac{1}{4},\frac{3}{4})$. For $j \ge 0$, let us define the event
$$
\Omega_j = \Big\{ \exists \lambda \in R_j \, \text{ such that }  \sup_{\lambda' \subset  \lambda} |c_{\lambda'}| <2^{-\frac{\alpha}{\eta+\frac{1}{4}}(j + \log_{2}j)}\Big\}.
$$
Now, let  us fix  $j_0 =\lfloor \frac{1}{\eta +\frac{1}{4}} (j + \log_2 j)\rfloor$ so that
$2^{-\alpha j_0} \ge 2^{-\frac{\alpha}{\eta+\frac{1}{4}}(j +
  \log_{2}j)}$. Using the assumption $\eta>\frac{1}{4}$, we obtain $j < j_0 \le 2j$
for $j$ large enough. Consequently, if $\lambda \in R_j$,
all the dyadic intervals $\lambda_0 \subset  \lambda$ of scale $j_{0}$
belong to $\Gamma_{j_0}$, as observed in Remark \ref{rem:R_j}, and may then potentially have a corresponding non-zero wavelet coefficient. Hence, the number of  dyadic intervals $\lambda_{0}\subset  \lambda$ of scale $j_0$  corresponding to  a random wavelet coefficient  is equal to $2^{j_{0}-j}$. By the independence of the random variables $\xi_{\lambda_0}$ and using Lemma \ref{lem:R_j}, we obtain  
 \begin{eqnarray*}
 \PP(\Omega_{j})
 & \leq  &\sum_{ \lambda \in R_j} \PP\left(\forall \lambda_0 \subset  \lambda \text{ with } \lambda \in
          \Lambda_{j_0}, \xi_{\lambda_{j_0}}=0\right)\\
 & \leq  & 2^{\frac{j}{2}} (1-2^{(\eta-\frac{3}{4})j_0})^{2^{(j_0-j) }}\\
 & \leq &  2^{\frac{j}{2}} \exp(-2 ^{(j_0-j)}2^{(\eta-\frac{3}{4})j_0}) \\
 & \leq &  2^{\frac{j}{2}} \exp(-2^{-j +j_0(\eta +\frac{1}{4})})\\
 & \leq &C \left(\frac{\sqrt{2}}{e}\right)^{j}
 \end{eqnarray*}
for some constant $C$ and for $j$ large enough. Since the majorant is the
 general term of a convergent series, the result follows using the
 Borel-Cantelli lemma.

\smallskip

Let us now study the case $\eta \in (0,\frac{1}{4}]$. Similarly as before, we consider for every $j\geq 0 $ the event  
$$
\Omega_j=\Big\{ \exists \lambda \in R_j \, \text{ such that } \sup_{\lambda' \subset  \lambda} |c_{\lambda'}|
<2^{-\frac{\alpha}{2\eta}(j+ \log_{2}j)}\Big\}.
$$
Let $j_0 = \lfloor \frac{1}{2\eta} (j+ \log_2 j)\rfloor$. Since $\eta \leq \frac{1}{4}$, we have now $j_0 >2j$.  As previously, we need to count how many dyadic intervals  $\lambda_0 \subset  \lambda$ belong to $\Gamma_{j_0}$. At the
scale $m=2j$, all the possible dyadic intervals of size $2^{-m}$ belong to $\Gamma_m$ because $\lambda$ intersects $\K$ so is included in $\C_{\frac{j}{2}}$. After that, from the construction of the Cantor set $\K$, every dyadic interval of $\Gamma_m$ loses half of their middle length every four scales, and we find that, writing $\ell_0=j_{0}-2j$,  there remain a number of order $2^{j+\frac{3 \ell_0}{4}}= 2^{\frac{3}{4}j_0 - \frac{j}{2}}$ dyadic intervals $\lambda_{j_0}
\subset  \lambda_j$ in $\Gamma_{j_0}$. Using the same development as in the first case, we obtain
$$
\PP(\Omega_j) \leq 2^{j /2} (1-2^{(\eta-\frac{3}{4})j_0})^{2^{(\frac{3}{4}j_0-\frac{j}{2}) }}  \leq  2^{\frac{j}{2}} \exp(-2 ^{(\frac{3}{4}j_0-\frac{j}{2})}2^{(\eta-\frac{3}{4})j_0}) 
  \leq \left(\frac{{\sqrt{2}}}{e}\right)^{j},
$$
and again, the Borel-Cantelli lemma allows us to conclude. \end{proof}

By combining Lemmas \ref{lem:minimal_regularity} and \ref{lem:maximal_regularity}, we find that the possible values of the H\"older exponent of $F^d_{\alpha,\eta}$ at any point of the Cantor set lie in $[\frac{2\alpha}{4\eta +1}, \frac{\alpha}{2\eta}]$ if $\eta \in (0,\frac{1}{4}]$, and in  $[\alpha, \frac{\alpha}{\eta + \frac{1}{4}}]$ if $\eta \in (\frac{1}{4},\frac{3}{4})$.  Let us now describe the iso-H\"older sets of $F^d_{\alpha,\eta}$. We will begin by giving a covering of $\K$ using balls centered at the dyadic points associated with non-zero coefficients.

\begin{proposition}\label{prop:recouvrementK}
    Almost surely, one has
$$
\K \subset  \begin{cases}
 \displaystyle \limsup_{j \to + \infty} \bigcup_{k\, :\, \lambda_{j,k}\in A_{j}} B\left(k2^{-j},
  2^{-2\eta(1-{\varepsilon_{j}})j}\right) & \text{ if } \eta \in (0,\frac{1}{4}],\\[3ex]
 \displaystyle  \limsup_{j \to + \infty} \bigcup_{k \, : \, \lambda_{j,k}\in A_{j}} B\left(k2^{-j},
  2^{-(\eta +\frac{1}{4})(1-{\varepsilon_{j}})j}\right) & \text { if }\eta \in (\frac{1}{4},\frac{3}{4}),
  \end{cases}
$$
where we recall that  $A_j = \{ \lambda \in \Gamma_j : \, c_{\lambda} = 2^{-\alpha j} \}$ and
$${\varepsilon_{j}}= \begin{cases}
  \frac{\log_{2}j}{2\eta j}  & \text{ if } \eta \in (0,\frac{1}{4}],\\[1.5ex]
 \frac{\log_{2}j}{(\eta+\frac{1}{4}) j}  & \text { if } \eta \in (\frac{1}{4},\frac{3}{4})
 .\end{cases}$$
\end{proposition}

 \begin{proof}
   We prove the result only in the case $\eta \in (0,\frac{1}{4}]$, as the other case is similar.    If $x \in \K$, Lemma \ref{lem:maximal_regularity} implies that for every sufficiently large scale $j$, there exists $\lambda_0 \subset  \lambda_{j}(x)$ of scale $j_0\geq j$ such that $c_{\lambda_0} = 2^{-\alpha j_{0}} \geq 2^{-\frac{\alpha}{2\eta}(j+ \log_{2}j)} $. In particular, $2 \eta j_{0} \leq j + \log_{2}j \leq j + \log_{2}j_{0}$, and it follows that
 \[
 |x-k_{0}2^{-j_{0}}| <  2^{-j} \leq  2^{-2\eta j_{0} + \log_{2}j_{0}} = 2^{-2\eta(1-\varepsilon_{j_0})j_{0}}.
 \]
\end{proof}

Based on the previous result, we now introduce limsup sets that will allow to describe the iso-H\"older sets of $F^d_{\alpha,\eta}$, as classically done (see e.g. \cite{Jaffard:00b,Jaffard:02,SaesSeuret}). We reproduce the following proof for the sake of completeness.

\begin{lemma}\label{lem:Edelta}
    For every $\delta \in (0,1]$, let us consider the random sets
\[
E_{\delta} = \limsup_{j \rightarrow + \infty}\bigcup_{k \, :\,  \lambda_{j,k}\in A_{j}}B\left( k 2^{-j}, 2^{-\delta (1-\varepsilon_{j})j} \right),
\]
where  $A_j = \{ \lambda \in \Gamma_j : \, c_{\lambda} = 2^{-\alpha j} \}$ and the sequence $(\varepsilon_j)_{j \in \N}$ is defined as in Proposition \ref{prop:recouvrementK}. 
\begin{itemize}
    \item If $x \in E_{\delta}$, then $h_{F^d_{\alpha, \eta}} (x) \leq \frac{\alpha}{\delta}$.
    \item If $h_{F^d_{\alpha, \eta}} (x) < \frac{\alpha}{\delta}$, then $x \in E_{\delta}$.
\end{itemize}
\end{lemma}

\begin{proof}
Let us first assume that $x \in E_{\delta}$. Then there exist infinitely many scales $j$ such that $x$ belongs to $B\left( k 2^{-j}, 2^{-\delta (1-\varepsilon_{j})j} \right)$ with $c_{j,k} = 2^{-\alpha j}$. If $j_0 = \lfloor \delta (1-\varepsilon_j)j\rfloor$, we can directly observe that $\lambda_{j,k} \subseteq \lambda_{j_0}(x)$, hence
$
d_{\lambda_{j_0}(x)} \geq 2^{-\alpha j}.
$
We get the conclusion by Theorem \ref{thm:waveletcharact}, 
since $\varepsilon_j \to 0$ as $j$ tends to $+ \infty$. 

Furthermore, if $h_{F^d_{\alpha, \eta}} (x) < \frac{\alpha}{\delta}$, a second application of Theorem \ref{thm:waveletcharact} implies the existence of infinitely many scales $j$ for which
$
d_j(x) > 2^{-\frac{\alpha}{\delta}j}.
$
It means that there is at least a dyadic interval $\lambda' \subseteq 3\lambda$ of scale $j' $ with $j \leq j' < \frac{1}{\delta}j$ with $c_{\lambda' }=c_{j',k'} =2^{-\alpha j'} $. Since $k'2^{-j'} \in 3 \lambda$, one gets
$$
|x-k'2^{-j'}|< 2^{1-j}< 2^{1-\delta j'}<2^{-\delta (1-\varepsilon_{j'})j'}
$$
since $\varepsilon_{j'}j' >1$ if $j'$ is large enough. Therefore $x \in E_{\delta}$.
\end{proof} 

\begin{remark}\label{rem:lemmaEdelta}
The preceding Lemma is a classical result that applies to any wavelet series with wavelet coefficients at scale $j$ being either $2^{-\alpha j}$ or $0$.
\end{remark}

\begin{proposition}
Let
\[
G_{\delta} := \K \cap\left( \bigcap_{0<\delta' < \delta} E_{\delta'}
\setminus \bigcup_{\delta < \delta' \leq 1} E_{ \delta'}\right) \ \text{ if }
\ \delta <1  \quad \text{ and }   \quad G_1 :=\K \cap \bigcap_{0<\delta' < 1} E_{\delta'}.
\]
For every $\delta \in (0,1]$, one has
$$G_{\delta} = \left\{x \in [0,1] : h_{F^d_{\alpha,\eta}}(x) = \frac{\alpha}{\delta}\right\}.$$
\end{proposition}

\begin{proof}
It suffices to use Lemma \ref{lem:Edelta} and to observe that the points $\in [0,1] $ for which $h_{F^d_{\alpha,\eta}}(x)<+ \infty$ are located on $\mathcal{K}$.
\end{proof}

Now, it remains to compute the Hausdorff dimension of the sets $G_{\delta}$ for $\delta$ in $[2\eta, \frac{4\eta + 1}{2}]$ if $\eta \in (0,\frac{1}{4}]$, and in $[ \eta + \frac{1}{4},1]$ if $\eta \in (\frac{1}{4},\frac{3}{4})$.
Note that the unions and the intersections  appearing in
the definition of $G_{\delta}$ can be taken countable by considering
subsequences converging to $\delta$. For this reason, in what follows,
everything can be made countable by fixing 
a dense sequence $(\delta'_{n})_{n \in \N}$ of $[0,1]$ and estimate
the Hausdorff dimension of each $E_{\delta'}$.

The upper bounds for $\dim_{\mathcal{H}}(E_{\delta}\cap \K)$ and hence for $\dim_{\mathcal{H}}(G_{\delta})$ can be obtained from Lemma \ref{lem:cardFj}, which provides the cardinality of the sets $A_{j}$.

\begin{proposition}\label{prop:bornesup}
    \begin{enumerate}
\item If $\delta \in (0, \frac{1}{2}]$, then almost surely
$$
\dim_{\mathcal{H}}(E_{\delta}\cap \K) \leq \frac{\eta}{\delta} 
$$
and $\mathcal{H}^{\frac{\eta}{\delta}  } (E_{\delta'}\cap \K) = 0$ for all $\delta'> \delta$.
\item If $\delta \in ( \frac{1}{2},1]$, then almost surely
$$
\dim_{\mathcal{H}}(E_{\delta}\cap \K) \le \frac{\eta+\frac{1}{4}}{\delta} -\frac{1}{2} 
$$
and $\mathcal{H}^{\frac{\eta+\frac{1}{4}}{\delta} -\frac{1}{2}  } (E_{\delta'}\cap \K) = 0$ for all $\delta'> \delta$.
\end{enumerate}
\end{proposition}

\begin{proof}
For every $J \in \N$, it is clear that the set
$$
\bigcup_{j \geq J }\bigcup_{k \, :\,  \lambda_{j,k}\in A_{j}} B\left( k 2^{-j}, 2^{-\delta (1-\varepsilon_{j})j} \right) \cap \K
$$
 forms a covering of $G_{\delta}$. When intersecting with $\K$, for a
fixed $j \geq J$,
we need to count the number $N_{\delta,j}$ of non-zero coefficients associated with
dyadic intervals $\lambda$ which are contained in $\C_{\lfloor \frac{j}{4} \rfloor}$ and are within a distance
less than $2^{-\delta (1-\varepsilon_{j}) j}$ from the
set $\C_{\lfloor \frac{j}{2}\rfloor}$. It will then suffice to study the convergence of the series
$$
 \sum_{j \geq J } N_{\delta, j}  2^{-\delta sj}.
$$
\begin{itemize}
\item If $\delta \leq \frac{1}{2}$, then $2^{-\delta(1-\varepsilon_{j}) j} \geq
  2^{-2n}$ for $j=4n$. Hence, the considered intervals within $\C_{\lfloor \frac{j}{4}
    \rfloor}$ and  at a distance less than $2^{-\delta(1-\varepsilon_{j})
     j}$ of  $\C_{\lfloor \frac{j}{2} \rfloor}$ are exactly the dyadic intervals of  $\C_{\lfloor \frac{j}{4}
    \rfloor}$. For every $\varepsilon>0$, Lemma \ref{lem:cardFj}  implies that, on an event of probability one that does not depend on $\delta$, one has
  $N_{\delta ,j} \leq 2^{(\eta + \varepsilon )j} $ for $j$ large enough, which implies
$$
 \sum_{j \geq J } N_{\delta, j}  2^{-\delta'(1-\varepsilon_{j}) sj}  \leq \sum_{j \geq J}  2^{(\eta + \varepsilon)j} 2^{-\delta(1-\varepsilon_{j}) s j} < + \infty
$$
if $s > \frac{\eta + \varepsilon}{\delta(1-\varepsilon)}$, since $\varepsilon_j \leq \varepsilon$ for $j$ large enough. Consequently, almost surely, one has $\dim_\mathcal{H} (E_{\delta}\cap \K) \leq s$. By replacing $\varepsilon$ by a sequence that converges to $0$, we conclude that $\dim_{\mathcal{H}}(E_{\delta}\cap \K)\le \frac{\eta}{\delta}$  almost surely.

\item If $\delta> \frac{1}{2}$,  we have to consider 
 the intervals $\lambda \in A_{j}$ 
 such that $\lambda$ is at a distance
 less than $2^{-\delta(1-\varepsilon_j) j}$ of the 
 set $\C_{\lfloor \frac{j}{2}\rfloor}$. Hence, we have first to count the number of
 dyadic interval $\lambda$ of scale $j$ which belong to $\C_{\lfloor \frac{j}{4} \rfloor}$
 and at a distance less than $2^{-\delta (1-\varepsilon_j)j}$ of $\C_{l}$ where $l$ is of order $\delta(1-\varepsilon_j) \frac{j}{2}$. Indeed, with this choice of $l$, the set $\C_l$ is formed by
 intervals of length of order  $2^{-\delta(1-\varepsilon_j) j}$.
This  number is bounded by $C2^{j-l}=C 2^{(1-\frac{\delta(1-\varepsilon_j)}{2})j}$ for some constant
 $C$ independent of $j$. Using  Markov inequality and the Borel
 Cantelli lemma as in Lemma \ref{lem:cardFj}, we get that for every $\varepsilon>0$, almost surely, 
 $N_{\delta,j} \leq 2^{(\frac{1}{4}+ \eta-\frac{\delta(1-\varepsilon_j)}{2}+ \varepsilon)j}$ if $j$ is large enough. It follows that 
$$
 \sum_{j \geq J } N_{\delta, j} 2^{-\delta sj} \leq   \sum_{j \geq J}2^{(\frac{1}{4}+ \eta-\frac{\delta(1-\varepsilon_j)}{2} + \varepsilon)j} 2^{-\delta(1-\varepsilon_j)s j} < + \infty
$$
if $s > \frac{\frac{1}{4}+ \eta + \varepsilon}{\delta (1-\varepsilon)} - \frac{1}{2}$. As in this first case, it
follows  $ \dim_{\mathcal{H}}(E_{\delta}\cap \K) \leq \frac{\frac{1}{4}+ \eta
}{\delta } - \frac{1}{2}$ on an event of probability one.

\end{itemize}
Combining both cases yields the stated upper bounds.  
\medskip

For the second part, observe that if $\delta'> \delta$, then $\dim_{\mathcal{H}}(E_{\delta'} \cap \K)$ is strictly smaller than the upper bound obtained for $\dim_{\mathcal{H}}(E_\delta \cap \K)$ from the previous part of the proof. Therefore, the conclusion follows.
\end{proof}

Establishing a lower bound for the Hausdorff dimension of $G_{\delta}$ requires the use of ubiquity arguments. We will employ a simplified version of the following result from \cite{Beresn:06} for our specific application.

\begin{theorem}[General mass transference principle]\cite{Beresn:06}\label{thm_transference}
Let $X$ be a compact set in $\R^n$ and assume that there exist $s\leq n$ and $a,b,r_0>0$ such that
\begin{equation}\label{eqGMT}
a r^s \leq \mathcal{H}^{s}(B \cap X) \leq b r^s
\end{equation}
for any ball $B$ of center $x \in X$ and of radius $r \leq r_0$. Let $s' >0$. Given a ball $B = B(x,r)$ with center in $X$, we set
\[
B^{s'} = B \left(x, r^{\frac{s'}{s}} \right).
\]
Assume that $(B_n)_{n \in \N}$ is a sequence of balls with center in $X$ and radius $r_n$  such that the sequence $(r_n)_{n \in \N}$ converges to 0. If 
\[
\mathcal{H}^s \left(  X \cap \limsup_{n \rightarrow + \infty} B_n^{s'} \right) = \mathcal{H}^s( X),
\]
then
\[
\mathcal{H}^{s'} \left(X \cap \limsup_{n \rightarrow + \infty} B_n \right) = \mathcal{H}^{s'}(X).
\]
\end{theorem}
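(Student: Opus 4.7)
The plan is to prove the statement via the mass distribution principle: I will construct, inside $X \cap \limsup_n B_n$, a Cantor-like set $K$ together with a Borel probability measure $\nu$ supported on $K$ satisfying $\nu(B(x,\rho)) \le C\rho^{s'}$ for every sufficiently small ball $B(x,\rho)$. This implies $\mathcal{H}^{s'}(K) \ge 1/C > 0$; running the same construction inside any small ball centered in $X$ gives $\mathcal{H}^{s'}(X \cap B(x,r) \cap \limsup_n B_n) > 0$ for every such ball, and combining this local non-degeneracy with the Ahlfors regularity \eqref{eqGMT} yields the full-measure conclusion.

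It is enough to treat the regime $0 < s' < s$: then $B_n^{s'} \supseteq B_n$, so the hypothesis provides many large balls while we want to conclude something about the much smaller ones (when $s' \ge s$ the inclusion reverses and the conclusion follows directly from the hypothesis together with a routine argument). As a first reduction, I would pass to a tail subsequence of $(B_n)$ whose radii decay so rapidly that radii from different generations of the construction below are well separated. The hypothesis $\mathcal{H}^s(X \cap \limsup_n B_n^{s'}) = \mathcal{H}^s(X)$ is stable under tail-subsequencing, so no information is lost.

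The Cantor set $K$ is built generation by generation, starting from a fixed ball $B^*$ with $\mathcal{H}^s(X \cap B^*) > 0$. At the end of generation $k$ we have a finite collection of disjoint balls $B_{n_i}$ from the original sequence, each with an assigned mass $\nu(B_{n_i})$. To produce the children of some $B_{n_i}$, I use the hypothesis together with \eqref{eqGMT} to see that, for $N$ large, the family $\{B_n^{s'}: n \ge N,\ B_n^{s'} \subseteq B_{n_i}\}$ covers a definite fraction of $\mathcal{H}^s(X \cap B_{n_i})$; a Vitali-type $5r$-covering lemma then extracts a disjoint subfamily whose total $\mathcal{H}^s$-mass on $X$ is still a definite fraction of $\mathcal{H}^s(X \cap B_{n_i})$. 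The corresponding concentric balls $B_n$, which are strictly smaller than $B_n^{s'}$ since $s' < s$, are declared the children of $B_{n_i}$, and the mass $\nu(B_{n_i})$ is split equally among them.

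The main technical obstacle is verifying the bound $\nu(B(x,\rho)) \le C\rho^{s'}$ at \emph{all} scales $\rho$, not just those matching a generation radius. For $\rho$ of the order of a child radius the bound is immediate from the inductive mass assignment combined with the Ahlfors regularity of $X$, which controls how many disjoint children of radius $r$ fit inside a parent of radius $r^{s'/s}$; the choice of exponent $s'/s$ in the definition of $B^{s'}$ is precisely tuned so that the resulting estimate comes out with exponent $s'$. For intermediate scales one sandwiches $\rho$ between two consecutive generation radii and interpolates, using the well-separation of radii secured at the subsequencing stage. Once this mass distribution is in place, the mass distribution principle followed by a standard covering argument yields the claimed equality of $s'$-Hausdorff measures.
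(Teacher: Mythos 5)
First, a remark on scope: the paper does not prove this statement at all --- it is quoted (in simplified form) from Beresnevich--Velani \cite{Beresn:06} and used as a black box, so the only meaningful comparison is with the original proof, whose overall architecture (Cantor-type subset of $X\cap\limsup_n B_n$, a mass distribution on it, a Frostman estimate with exponent $s'$, then the mass distribution principle) your sketch indeed follows.

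However, there is a genuine gap at the heart of your construction: a single Vitali extraction per parent, with the mass split among the selected children, does not produce the estimate $\nu(B(x,\rho))\le C\rho^{s'}$. Quantitatively: if the parent has radius $R$ and mass $\nu(P)\le CR^{s'}$, the disjoint balls $B_n^{s'}\subseteq P$ extracted by the $5r$-lemma satisfy only $\sum_n r_n^{s'}\gtrsim R^{s}$ (their $\mathcal{H}^s$-weight is a fraction of $\mathcal{H}^s(X\cap P)\asymp R^s$). Splitting $\nu(P)$ among them (equally, or proportionally to $r_n^{s'}$) gives a child mass of order $r_n^{s'}R^{s'-s}$, and since $s'<s$ and $R<1$ the factor $R^{s'-s}$ exceeds $1$ and blows up along the generations; the measure you build genuinely violates the Frostman bound at child scales, so the mass distribution principle cannot be applied with any uniform constant. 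This is precisely the difficulty the Beresnevich--Velani proof is designed to overcome: inside each parent they select not one but many successive ``sub-collections'' of disjoint balls $B_n^{s'}$ --- possible because the actual children $B_n$ are far smaller than the sets $B_n^{s'}$ used for the covering, so later layers can avoid the balls already chosen --- until the total captured weight inside a parent of radius $R$ is comparable to $R^{s'}$ rather than $R^{s}$. Without this multi-layer device (or some substitute), the induction does not close, and this is the key idea missing from your outline rather than a routine technicality.

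A second, smaller issue concerns the endgame. In the interesting regime $s'<s$ one has $\mathcal{H}^{s'}(X)=+\infty$, so the conclusion is that $X\cap\limsup_n B_n$ has \emph{infinite} $\mathcal{H}^{s'}$-measure; knowing only that $\mathcal{H}^{s'}\bigl(X\cap B(x,r)\cap\limsup_n B_n\bigr)>0$ for every ball centered in $X$ does not imply this. You need either the Beresnevich--Velani device of an auxiliary parameter making the lower bound from the mass distribution principle arbitrarily large, or a uniform, scale-invariant local bound of the form $\mathcal{H}^{s'}\bigl(X\cap B(x,r)\cap\limsup_n B_n\bigr)\ge c\,r^{s'}$ with $c$ independent of the ball, which combined with \eqref{eqGMT} lets you pack $\gtrsim r^{-s}$ disjoint balls of radius $r$ and let $r\to 0$. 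As written, your final sentence asserts the conclusion from bare positivity, which is insufficient.
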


The following result provides the Hausdorff dimension of the set $G_\delta$.

\begin{proposition}\label{prop_sup}
\begin{enumerate}
\item If $\eta \in [\frac{1}{4}, \frac{3}{4})$, then almost surely
$$
\dim_{\mathcal{H}}(G_{\delta}) = 
 \frac{\eta+\frac{1}{4}}{\delta} -\frac{1}{2}  
$$
for every $\delta \in [\eta +\frac{1}{4},1]$. 
\item If $\eta \in (0, \frac{1}{4}]$, then almost surely
$$
\dim_{\mathcal{H}}(G_{\delta}) = \begin{cases}
\frac{ \eta}{\delta} & \text{ if }  \delta \in [2\eta, \frac{1}{2}],\\[1ex]
\frac{\eta+\frac{1}{4}}{\delta} -\frac{1}{2} & \text{ if } \delta
\in [\frac{1}{2}, \frac{4\eta +1}{2}].\\
\end{cases}
$$
\end{enumerate}
\end{proposition}

\begin{proof}
The upper bounds are directly derived from Proposition \ref{prop:bornesup} since $G_\delta \subseteq E_{\delta'}\cap \K$ for every $\delta'> \delta$. Let us focus on establishing the lower bounds. First, let us estimate $\dim_{\mathcal{H}}(E_{\delta})$ from below.
 
Let us start by assuming that either $\eta> \frac{1}{4}$ and  $\delta \geq \eta + \frac{1}{4}$,
 or $\eta \leq \frac{1}{4}$ and $\delta > \frac{1}{2}$. In both cases, $\delta
\geq \frac{1}{2}$.  
As done in the proof of Proposition \ref{prop:bornesup}, it suffices to consider in the definition of $E_{\delta}$ the non-zero coefficients $c_{\lambda}$ where $\lambda$ belongs to $\C_{\lfloor \frac{j}{4}   \rfloor}$ and is at a distance at most $2^{-\delta(1-\varepsilon_j) j}$ from $\C_{\lfloor \frac{j}{2} \rfloor} $.
Let us fix $\varepsilon>0$ and a dyadic interval $\lambda_0$ of scale $J=2l$ appearing in $\mathcal{C}_l$ where $l= \delta (1-\varepsilon_j)\frac{j}{2}$. Notice that 
$$ \mathbb{P}\big( \exists \lambda \in \Gamma_j \, , \lambda \subseteq \lambda_0, \, \text{such that } c_{\lambda} =2^{-\alpha j} \big)
     = 2^{(\eta - \frac{3}{4})j} 2^{j-2l}
     = 2^{\big(\frac{ \eta+ \frac{1}{4}}{\delta}(1-\varepsilon_j)  -1\big) J}
    \geq 2^{\big(\frac{ \eta+ \frac{1}{4}}{\delta}(1-\varepsilon)  -1\big) J}
$$
if $j$ is large enough. 
Consider a new (classical) lacunary wavelet series $\widetilde{F}$ on $\mathcal{K}$ where 
$$
\widetilde{c}_{J,k}= \begin{cases}
2^{-\alpha J} \widetilde{\xi_{J,k}} & \text{ if }  \lambda_{J,k} \text{ appears in } \mathcal{C}_{\lfloor\frac{J}{2} \rfloor}\\
0 & \text{ otherwise,}
\end{cases}
$$
 and 
where  $ (\widetilde{\xi_{J,k}})_{J,k}$ denotes a sequence of independent random Bernoulli
variables of parameter $2^{(\widetilde{\eta}-\frac{1}{2})J}$
with
$$
\widetilde{\eta} =  \frac{\eta+ \frac{1}{4}}{\delta}(1-\varepsilon) -\frac{1}{2}
$$
Now, remark that the position of the non-zero coefficients we consider in  $E_{\delta}$ can be seen as the position  of the non-zero coefficients of $\widetilde{F}$. The Hausdorff dimension of $E_{\delta}$ is then larger than the  Hausdorff dimension of the set of minimal regularity of
 this new lacunary wavelet series. It follows  from Proposition
 \ref{prop:lowerboundclassic} that 
$$
\dim_{\mathcal{H}}E_{\delta} \geq \widetilde{\eta} =  \frac{\eta+ \frac{1}{4}}{\delta}(1-\varepsilon) -\frac{1}{2}
.$$

   \medskip

Let us now focus on the case $\delta \in [2 \eta,\frac{1}{2}]$ which only occurs
 for $ \eta \leq \frac{1}{4}$. As in the classical case, we use an ubiquity
argument. Note that the argument of the general mass transference
could not have been applied in the case we just dealt for large values
of $\delta$ for the following reason. The assumptions of Theorem
\ref{thm_transference} require that the balls are centered in
$X=\K$. For $\delta>\frac{1}{2}$, if $k 2^{-j }\notin \K$, the ball $B(k2^{-j},
2^{-\delta(1-\varepsilon_j) j})$ of $E_{\delta}$ does not necessarily meet the Cantor set $\K$ (even by multiplying the radius with a constant independent of $j$). At the opposite, if $\delta<\frac{1}{2}$, all balls  $B(k2^{-j},
2^{-\delta(1-\varepsilon_j) j})$  of $E_{\delta}$ intersect the Cantor $\K$, and by doubling it we can suppose that each ball of $E_{\delta}$ is centered in $\K$.

Applying Theorem \ref{thm_transference} thanks to Lemma \ref{lem:maximal_regularity}, we get
\[
\mathcal{H}^{\eta/\delta} \left(  \K \cap \limsup_{j \to + \infty}
  \bigcup_{k\, : \, \lambda_{j,k} \in A_j}B(k2^{-j},  2^{-\delta(1-{\varepsilon_{j}})j}) \right) =  \mathcal{H}^{\eta/\delta}(\K)>0
\]
since $\frac{\eta}{\delta}\leq \frac{1}{2}$. We conclude that almost surely, $\dim_{\mathcal{H}}(E_{\delta}) \ge \frac{\eta}{\delta}$.
\medskip

Let us now compute the dimension of $G_{\delta}$. Notice that the unions and intersections in the definition of $G_{\delta}$ can be made countable by considering subsequences converging to $\delta$, so that we can consider that we work on an event of probability one independent of $\delta$. In the case where $\delta \le \frac{1}{2}$, we have
\[
\mathcal{H}^{\frac{\eta}{\delta}} (G_{\delta}) = \mathcal{H}^{\frac{\eta}{\delta}} \left( \bigcap_{\delta'  < \delta} E_{\delta'} \cap \K \right) - \mathcal{H}^{\frac{\eta}{\delta}} \left( \bigcup_{\delta'  > \delta} E_{\delta'}\cap \K \right) =  \mathcal{H}^{\frac{\eta}{\delta}} \left( \bigcap_{\delta'  < \delta} E_{\delta'} \cap \K \right)
\]
since the $\frac{\eta}{\delta}$-dimensional Hausdorff measure of $E_{\delta'}\cap \mathcal{K}$ vanishes
if $\delta' > \delta$ using Proposition \ref{prop:bornesup}. Therefore,
\[
\mathcal{H}^{\frac{\eta}{\delta}} (G_{\delta}) = \mathcal{H}^{\frac{\eta}{\delta}} \left( \bigcap_{\delta'  < \delta} E_{\delta'} \cap \mathcal{K} \right) \geq \mathcal{H}^{\frac{\eta}{\delta}} (E_{\delta} \cap \mathcal{K}) >0,
\]
which implies $\dim_{\mathcal{H}}(G_{\delta}) \geq 
\frac{\eta}{\delta}$. 
The case where $\delta > \frac{1}{2}$ can be established similarly.
\end{proof}

\bigskip

\subsection{Computation of the leader large deviation spectrum of  the duplicated LWS}

Let us explain the idea of the computation of the leader large deviation
spectrum. First, recall that it can be equivalently be computed using the restricted wavelet leaders. Furthermore, essentially, a non-zero coefficient $c_{j,k} = 2^{-\alpha j}$ generates a restricted wavelet leader of size $2^{-hj'}$ where $j' = \frac{h}{\alpha}j$. Given that the non-zero coefficients are well spread on the Cantor set $\K$ according to Lemma \ref{lem:concentration}, approximately $\# A_{j}$ restricted wavelet leaders of order $2^{-hj'}$ are expected at scale $j'$. Consequently, Lemma \ref{lem:cardFj} implies $\rho_{F^d_{\alpha,\eta}}(h) =  \frac{h \eta }{\alpha}$. Particularly, $\rho_{F^d_{\alpha,\eta}}(\alpha) = \eta$ as anticipated.

Note that the possible values for $h$ are already known. Indeed,
from Lemma \ref{lem:maximal_regularity}, we know that
almost surely, the restricted wavelet leaders associated with dyadic intervals
$\lambda  \in R_{j}$ satisfy 
$$
e_{\lambda} \geq \begin{cases} 2^{-(\frac{\alpha}{2\eta}+
    \varepsilon)j} & \text{ if } \eta \in (0,\frac{1}{4}]\\[1ex]
2^{-(\frac{\alpha}{\eta+\frac{1}{4}}+\varepsilon)j} & \text { if }\eta \in (\frac{1}{4},\frac{3}{4}) \end{cases}
$$
for $j$ large enough. Note also that dyadic intervals $\lambda  \in \Gamma_{j}\setminus R_{j}$ contain a smaller number of dyadic subintervals corresponding to the random coefficients appearing in the construction of $F^d_{\alpha,\eta}$ than dyadic intervals in $R_{j}$. In particular, for each $\lambda  \in \Gamma_{j}\setminus R_{j}$, there is a scale $j'$ such that $c_{\lambda'}=0$ if $\lambda' \subset \lambda$ is a dyadic interval of scale $j'$. 
This implies that the corresponding non-zero restricted wavelet leaders cannot be smaller than the restricted wavelet leaders associated with intervals in $R_j$ (since smaller restricted wavelet leaders are realized at larger scales).

More precisely, in the case  $\eta \in (\frac{1}{4},\frac{3}{4})$, the same arguments as those of the proof of
Lemma \ref{lem:maximal_regularity} give that almost surely, for the dyadic intervals $\lambda  \in \Gamma_{j}\setminus R_{j}$ and for every $\varepsilon >0$, if $e_{\lambda }\neq 0$, then  $e_{\lambda}\geq 2^{-(\frac{\alpha}{\eta+\frac{1}{4}}+\varepsilon)j}$ for sufficiently large $j$. Hence, the support of the leader large deviation spectrum lies within $[\alpha, \frac{\alpha}{\eta+\frac{1}{4}}]$.

In the highly lacunar case $\eta \in (0,\frac{1}{4})$, we know that if $\lambda' \subset  \lambda$  is a dyadic interval of scale  $j' \geq 2j$ with $\lambda  \in \Gamma_{j}\setminus R_{j}$, then $\lambda' \notin \Gamma_{j'}$. Consequently, the restricted wavelet leader $e_{\lambda}$ is either equal to $0$ or $2^{-\alpha j'}$ with $j' < 2j$, in which case $e_{\lambda} \geq 2^{-2 \alpha j} \geq
2^{-\frac{\alpha}{2\eta}j}$. This implies that the support of the leader large deviation spectrum is contained within $[\alpha, \frac{\alpha}{2\eta}]$.

\begin{proposition}
  Almost surely, one has
  $$
\rho_{F^d_{\alpha,\eta}}(h) = \begin{cases}
\frac{h \eta}{\alpha}& \text{ if } h  \in [\alpha, h_{\max}],\\[1ex]
1 & \text{ if } h= + \infty,\\[1ex]
- \infty & \text{otherwise.}
\end{cases}
$$
where $$h_{\max} = \begin{cases}\frac{\alpha}{2 \eta} & \text{ if
  } \eta \in (0,\frac{1}{4}],\\[1ex]
  \frac{\alpha}{\eta +\frac{1}{4}} & \text{ if } \eta \in (\frac{1}{4},\frac{3}{4}).
  \end{cases}$$
\end{proposition}

\begin{proof}
The result is straightforward if $h \notin [\alpha, h_{\max}]$. Let us then fix $h \in [\alpha, h_{\max}]$ and $\varepsilon >0$. For every sufficiently large scale $j$, one has almost surely
$$\#\{ \lambda \in \Lambda_{j } : 2^{- (h+\varepsilon)j} \leq e_{\lambda} \leq 2^{- (h-\varepsilon)j} \} \leq \sum_{j' = \lfloor \frac{h-\varepsilon}{\alpha} j\rfloor}^{\lfloor \frac{h+\varepsilon}{\alpha} j\rfloor +1} \#A_{j'} \leq C j 2^{(\eta + \varepsilon)\frac{h+\varepsilon}{\alpha} j}$$
for some constant $C>0$ and sufficiently large $j$, where we have used Lemma \ref{lem:cardFj}. This directly implies the upper bound for $\rho_{F^d_{\alpha,\eta}}(h)$.

The lower bound holds in the case where $\eta \in (0,\frac{1}{4}]$ and $h \in [2 \alpha, \frac{\alpha}{2 \eta}]$ directly from Propositions \ref{prop:comparison} and \ref{prop_sup}. Thus, we can assume either $\eta \in (0,\frac{1}{4}]$ and $h < 2 \alpha$, or $\eta \in (\frac{1}{4},\frac{3}{4})$. Let $j'$ be a scale such that every interval of length $2^{-(\eta + \frac{1}{4}-\varepsilon)j'}$ contains at most $2^{2\varepsilon j'}$ non-zero coefficients. It is known from Lemma \ref{lem:concentration} that almost surely there are infinitely many such scales. Given $j = \lfloor \frac{\alpha}{h+\varepsilon}j'\rfloor$, it can be directly computed that $2^{-j}\leq 2^{-(\eta + \frac{1}{4}-\varepsilon)j'}$. This implies that every $\lambda \in \Lambda_{j}$ contains at most $2^{2\varepsilon j'}$ non-zero coefficients of scale $j'$. By applying Lemma \ref{lem:cardFj} again, we obtain
$$
\#\{ \lambda \in \Lambda_{j} : e_{\lambda} \geq 2^{- (h+\varepsilon)j} \} \geq \# A_{j'} 2^{-2\varepsilon j'} \geq 2^{(\eta - 3 \varepsilon )\frac{h+ \varepsilon}{\alpha}j}.
$$
This inequality implies that almost surely
$$
\rho_{{\rm cum}, F^d_{\alpha,\eta}}(h) \geq \frac{h \eta}{\alpha}
$$
for every $h \in [\alpha, h_{\max}]$ if $\eta \in [\frac{1}{4},\frac{3}{4})$, and for every $h \in [\alpha, 2 \alpha)$ if $\eta \in (0,\frac{1}{4}]$. Equation \eqref{eq:rhocum} leads to the desired conclusion.
\end{proof}

\section{Detection of overestimations of multifractal spectra}

This section aims at proving Theorem \ref{TheoDetect} and its Corollary \ref{cor:validity}. 
Recall that we consider a decreasing sequence of closed subsets $\I_j, j \in \N$, of $[0,1]$. Let us set 
$$
\I = \bigcap_{j \in \N}\I_j \quad \text{ and } \quad  H = \dim_{\mathcal{H}}(\I).  
$$
Fix also $\alpha>0$. 
The wavelet series we study is defined as 
$$
F^{(\alpha, (\I_j))}= \sum_{j \in \N} \sum_{k=0}^{2^j-1} c_{j,k} \psi_{j,k}
\quad \text{with} \quad
c_{\lambda} = \begin{cases} 2^{- \alpha j} & \text{ if } \lambda_{j,k} \cap \I_j \neq \emptyset ,\\
0 &\text{ otherwise }. \end{cases}
$$
The following lemma gives an estimation of the cardinality of the set of non-zero coefficients of $F^{(\alpha, (\I_j))}$ at scale $j$. 

\begin{lemma}\label{lem:Gamma_j}
For every $j \in \N$, let $\Gamma_j$ be the set defined as 
$$\Gamma_j = \{ \lambda \in \Lambda_j :  c_{\lambda} \neq 0 \}.$$
Assume that $F^{(\alpha, (\I_j))}$ satisfies the leader large deviation formalism. 
    Then, for every $\varepsilon>0$, there exists $J \in \N$ such that  
    $$
    2^{(H- \varepsilon)j} \le \# \Gamma_j \le 2^{(H+\varepsilon)j}
    $$
    for all $j \geq J$. 
\end{lemma}

\begin{proof}
Assume first that $H<1$. Clearly, one has $$\mathcal{D}_{F^{(\alpha, (\I_j))}}(h) =  \begin{cases} H & \text{ if } h = \alpha ,
\\ 1 &\text{ if } h = + \infty ,
\\ - \infty & \text{ otherwise.} \end{cases}$$
Hence, since $F^{(\alpha, (\I_j))}$ satisfies the leader large deviation formalism, we have $\log_2 \# \Gamma_j \le \rho_{F^{(\alpha, (\I_j))}}(\alpha) + \varepsilon$, {\it i.e.} $\#\Gamma_j \le 2^{j(H+\varepsilon)}$ for $j$ large enough. 

The lower bound is proved by contradiction. Assume that there is a sequence $(j_n)_n$ such that $\# \Gamma_{j_n} \le 2^{j_n(H-\varepsilon)}$. The dyadic intervals of $\Gamma_{j_n}$ form a covering of
$\I$ by less than $2^{j_n(H-\varepsilon)}$ intervals of size $2^{-j_n}$. It follows that  $\dim_{\mathcal{H}}(\I) \le H- \varepsilon$, which gives the contradiction.

If $H=1$, the upper bound is direct and the lower bound is obtained in the same way.
\end{proof}

\noindent{\bf{Proof of Part 1 of Theorem \ref{TheoDetect}}}. Assume first that $H<1$. 
Let us fix $\varepsilon >0$ and $\beta>0$. Our aim is to show that the set of ``normal duplication''
$$
ND(j, \beta, \varepsilon) = \big\{ \lambda \in \G_j : 2^{(\beta H  - 4 \max(1,\beta) \varepsilon)j}\le  \# \{ \lambda' \subset \lambda :  \lambda' \in \G_{\lfloor (1+ \beta) j \rfloor} \}\le 2^{ (\beta H+ 4 \max(1,\beta) \varepsilon)j} \big\}
$$
satisfies
$$
2^{(H-2\varepsilon)j} \le \# ND(j, \beta, \varepsilon)\le 2^{(H+\varepsilon)j}
$$
for large values of $j$. The upper bound of $\# ND(j, \beta, \varepsilon)$ is easily derived from Lemma \ref{lem:Gamma_j}. We proceed by contradiction to establish the lower bound. Suppose, then, that there exists a sequence $(j_n)_n$ such that, for all  $n$,  
\begin{equation}\label{eq:ND}
    \# ND(j_n, \beta, \varepsilon) \le 2^{(H- 2\varepsilon)j_n}. 
\end{equation}
In the same spirit as $ ND(j, \beta, \varepsilon)$, we introduce the set of dyadic intervals of scale $j$ with ``slow duplication'' at scale $\lfloor (1+\beta)j \rfloor$
$$
SD(j,\beta, \varepsilon) =   \big\{ \lambda \in \G_j :   \# \{ \lambda' \subset \lambda:\lambda' \in \G_{\lfloor (1+ \beta) j \rfloor} \}\le 2^{ (\beta H- 4 \max(1,\beta)\varepsilon)j} \big\}
$$
and the set of dyadic intervals with ``fast duplication'' (which is empty in the case $H=1$)
$$
FD(j,\beta, \varepsilon) =  \big\{ \lambda \in \G_j :  \# \{ \lambda' \subset \lambda : \lambda' \in \G_{\lfloor (1+ \beta) j \rfloor} \}\ge 2^{ (\beta H + 4\max(1,\beta) \varepsilon)j} \big\}.
$$
Clearly, one has
$$\G_j = ND(j,\beta, \varepsilon) \cup SD(j,,\beta, \varepsilon) \cup FD(j,\beta, \varepsilon).$$
First, since we know from Lemma \ref{lem:Gamma_j} that $\# \G_{\lfloor (1+ \beta) j \rfloor}$ is bounded by $2^{(1+\beta)(H+\varepsilon)j}$ for $j$ large enough, we obtain that
$$
\# FD(j, \beta, \varepsilon) \times 2^{(\beta H+4 \max(1,\beta) \varepsilon)j} \le \# \G_{\lfloor (1+ \beta) j \rfloor} \le 2^{(1+\beta)(H+\varepsilon)j}
$$
for $j$ large enough. It  comes that 
\begin{equation}\label{eq:FD}
\# FD(j,\beta, \varepsilon) \le 2^{(H+(1+\beta) \varepsilon - 4 \max(1,\beta) \varepsilon)j} \le 2^{(H-2\varepsilon)j}. 
\end{equation}
In order to estimate the cardinality of $SD(j,\beta, \varepsilon)$, let us introduce the set of the ``children'' of $SD(j, \beta, \varepsilon)$ by setting
$$
CSD(j, \beta, \varepsilon) = \{ \lambda' \in \G_{\lfloor (1+ \beta) j \rfloor} : \lambda' \subset \lambda \text{ with } \lambda \in SD(j, \beta, \varepsilon) \}.
$$
Again, Lemma  \ref{lem:Gamma_j} implies that $\# SD(j, \beta, \varepsilon)$ is bounded by $2^{j(H+\varepsilon)}$ for $j$ large enough, and since the dyadic intervals of $SD(j, \beta, \varepsilon)$ present a slow duplication, we can control the cardinal of $CSD(j, \beta, \varepsilon)$ by
\begin{equation}\label{eq:CSD}
    \# CSD(j, \beta, \varepsilon) \le 2^{(H+\varepsilon)j} 2^{ (H\beta - 4 \max(1,\beta) \varepsilon) j} \le 2^{((1+\beta) H-3 \max(1,\beta) \varepsilon )j}.
\end{equation}
For each $n \ge 0$, the daydic intervals of $$
ND(j_n, \beta, \varepsilon) \cup FD(j_n, \beta, \varepsilon) \cup CSD(j_n, \beta, \varepsilon)$$ 
form a covering of $\I$ with sets of diameter smaller than $2^{-j_n}$.
Together with \eqref{eq:ND}, \eqref{eq:FD}  and \eqref{eq:CSD}, it implies that, for any $r>0$ and $0 <s<1$, 
$$
{\mathcal{H}}^s_r(\I) \le 2 \times 2^{j_n(H-2\varepsilon)} 2^{-j_n s}+ 2^{j_n((1+\beta) H-3 \max(1,\beta) \varepsilon)} 2^{-(1+\beta)j_n s}
$$
for any $j_n$ such that $2^{-j_n} \le r$. Taking $s = H - \varepsilon$, we obtain
$$
{\mathcal{H}}^s_r(\I) \le 2 \times 2^{-\varepsilon j_n} + 2^{-j_n \max(1,\beta) \varepsilon}.
$$
It follows that $\lim_{r \to 0^+} {\mathcal{H}}^s_r(\I) =0$ and therefore $\text{dim}_H(\I) \le s- \varepsilon$, which is impossible. It gives the conclusion. 

If $H=1$, the result is obtained in a similar way, with obvious simplifications (some upper bounds are immediate, and the set $FD(j, \beta, \varepsilon)$ is empty). 
\hfill \qed

\bigskip

\noindent{\bf Proof of Part 2 of Theorem \ref{TheoDetect}} Let $\eta \in (1-H,1]$. We consider the lacunarized version of $F^{(\alpha, (\I_j))}$ given by 
$$
L_{\eta}(F^{(\alpha, (\I_j))})= \sum_{j \in \N} \sum_{k=0}^{2^j-1} \xi^{\eta}_{j,k}c_{j,k} \psi_{j,k}
\quad \text{with} \quad
c_{j,k} = \begin{cases} 2^{- \alpha j} & \text{ if } \lambda_{j,k} \cap \I_j \neq \emptyset , \\
0 &\text{ otherwise},\end{cases}
$$
and where $(\xi^{\eta}_{j,k})_{j,k}$ denotes a sequence of independent random Bernoulli variables of parameter $2^{(\eta-1)j}$.  Let us begin by noting that, from the decreasing property of the sequence $(\I_j)_{j \in \N}$, one can easily establish that
$$
\big\{\lambda \in \Lambda_j : \sup_{\lambda' \subset \lambda} |\xi^{\eta}_{\lambda'}c_{\lambda'} |  \geq 2^{-(h+\varepsilon)j } \big\}  \subset  \G_j 
$$
for all $h >0$ and all $\varepsilon>0$. Lemma \ref{lem:Gamma_j} directly implies that
$$
\rho_{L_{\eta}(F^{(\alpha, (\I_j))})}(h) \leq  H
$$
for all $h >0$. Now, let us show that if
$$\tau = \eta + H-1,
$$
then
$$
\rho_{L_{\eta}(F^{(\alpha, (\I_j))})}\left( \frac{\alpha H}{\tau}\right) =  H
$$
which will yield the desired result.

Notice that, using Lemma \ref{lem:Gamma_j}, the definition of $\tau$ ensures that there are approximately  $2^{\tau j}$  non-zero wavelet coefficients expected at the scale $j$ in the expansion of $L_{\eta}(F^{(\alpha, (\I_j))})$ for $j$ large enough. 
Fix an arbitrary $n \in \N_0$ and consider $$\beta_n = \frac{H}{\tau} + \frac{1}{n} -1 \quad \text{and} \quad
\varepsilon_n = \frac{ \tau}{8 n \max(1,\beta_n)}.$$
The first part of the theorem gives the existence of a $J_n \in \N$ such that, for all $j \ge J_n$ 
$$
2^{(H- 2\varepsilon_n )j} \le \# ND(j,\beta_n, \varepsilon_n) \le 2^{(H+\varepsilon_n)j}.
$$
For every $j \ge J_n$, we consider the event
$$
\Omega_{j,n} = \big\{ \exists \lambda \in ND(j,\beta_n,\varepsilon_n) \text{ such that } \sup_{\lambda' \subset \lambda} |\xi^{\eta}_{\lambda'}c_{\lambda'} | < 2^{-\alpha (1+ \beta_n) j}\big\}.
$$
We fix the scale $j_0 = \lfloor (1+ \beta_n ) j \rfloor = \lfloor (\frac{H}{\tau} + \frac{1}{n})j \rfloor$ that satisfies $2^{-\alpha j_0} \ge 2^{-\alpha (\frac{H}{\tau}+ \frac{1}{n}) j}$. By definition of $ND(j,\beta_n,\varepsilon_n)$, we know that for every $\lambda$ in this set, there are at least $2^{(\beta_n H  - 4 \max(1,\beta_n) \varepsilon_n)j}$ subintervals $\lambda'$ of $\lambda $ of scale $j_0$ such that $c_{\lambda'}=2^{-\alpha j_0}$. 
The independence of the Bernoulli random variables leads then to 
\begin{eqnarray*}
\mathbb{P} (\Omega_{j,n}) & \le & \sum_{\lambda \in ND(\beta_n,j)} \mathbb{P}\big(\forall \lambda_0 \subset \lambda {\text{ with } } \lambda_0 \in \G_{j_0}, \, \xi^{\eta}_{\lambda_0}= 0\big)\\
& \le & 2^{j(H+\varepsilon_n)} (1- 2^{(\eta-1) j_0})^{2^{(\beta_n H - 4 \max(1,\beta_n) \varepsilon_n)j}}\\
    & \le & 2^{j(H+\varepsilon_n)} \exp(-2^{(\beta_n H - 4 \max(1,\beta_n) \varepsilon_n)j} 2^{(\eta - 1) j_0})\\
    & \le & 2^{j(H+\varepsilon_n)} \exp (- 2^{j(\beta_n H - 4 \max(1,\beta_n)\varepsilon + (\eta - 1) (\beta_n + 1))} )\\
& =& 2^{j(H+\varepsilon_n)} \exp(-2^{j(\frac{1}{n} \tau - 4 \max(1,\beta_n) \varepsilon_n)})\\
& = & 2^{j(H+\varepsilon_n)} \exp(-2^{\frac{\tau}{2n}j}).
\end{eqnarray*}
Since $\tau>0$,  the Borel Cantelli Lemma implies the existence almost sure of a scale $J_n \in \mathbb{N}$ such that for all $j \ge J_n$  
and all $\lambda \in ND(j,\beta_n, \varepsilon)$, one has
$$
\sup_{\lambda' \subset \lambda} |\xi^{\eta}_{\lambda'}c_{\lambda'} | > 2^{-\alpha (1+ \beta_n) j}= 2^{-\alpha (\frac{H}{\tau}+\frac{1}{n}) j}.
$$
In particular, almost surely, one has 
$$
\# \big\{\lambda \in \Lambda_j : \sup_{\lambda' \subset \lambda} |\xi^{\eta}_{\lambda'}c_{\lambda'} |  \geq 2^{-\alpha (\frac{H}{\tau}+\frac{1}{n})j } \big\} \geq \# ND(j,\beta_n,\varepsilon_n) \geq 2^{(H-\varepsilon_n)j}
$$
for $j$ large enough.  Given any $\varepsilon>0$, if $n$ is such that $\frac{\alpha}{n}\leq \varepsilon$, we get that
$$
\# \big\{\lambda \in \Lambda_j : \sup_{\lambda' \subset \lambda} |\xi^{\eta}_{\lambda'}c_{\lambda'} |  \geq 2^{-(\alpha\frac{H}{\tau}+\varepsilon)j } \big\} \geq \# \big\{\lambda \in \Lambda_j : \sup_{\lambda' \subset \lambda} |\xi^{\eta}_{\lambda'}c_{\lambda'} |  \geq 2^{-\alpha (\frac{H}{\tau}+\frac{1}{n}) j} \big\} \geq 2^{(H-\varepsilon_n)j}
$$
hence 
$$
\rho_{cum, L_{\eta}(F^{(\alpha, (\I_j))})}\left(\frac{\alpha H}{\tau}\right) 
\geq \lim_{n \to + \infty} H-\varepsilon_n = H.
$$\hfill \qed

\bigskip

\noindent{\bf Proof of Part 3 of Theorem \ref{TheoDetect}} 
Let  $N_j$ denote the number of non-zero coefficients of $L_\eta (F^{(\alpha, (\I_j))})$ at scale $j$. Clearly, one has
$$\mathbb{E}[N_j] \leq  2^{(H+\varepsilon+ \eta - 1)j}$$ 
and an application of Chebychev inequality gives
$$
\sum_{j \in \N} \mathbb{P}\big(|N_j - \mathbb{E}[N_j]| \geq 2^{\varepsilon j } \big) \leq \sum_{j \in \N}\frac{2^{(H+\varepsilon+ \eta - 1)j}}{2^{2 \varepsilon j} } < + \infty
$$
since $H + \varepsilon + \eta -1 <0$. The Borel Cantelli Lemma implies then that  almost surely, 
 $$N_j \leq 2^{(H+ \varepsilon + \eta -1) j }  +  2^{\varepsilon j} \leq 1+ 2^{\varepsilon j}$$
 for every $j$ large enough.  Since $\varepsilon>0$ is arbitrary, it follows that 
 $$
\limsup_{j \to + \infty} \frac{\log \# N_j}{\log 2^j} \leq 0.
 $$
 One easily conclude that $\rho_{L_\eta F^{(\alpha, (\I_j))}}(\beta) = 0$ for every $\beta \geq \alpha$, since a wavelet leader of order $2^{-\beta j}$ comes from a wavelet coefficient equals to $2^{-\alpha j'}$ at scale $j' = \frac{\beta}{\alpha}j$. 
 \hfill \qed 

\medskip

Let us now turn to the proof of Corollary \ref{cor:validity}. Note that contrarily to the functions $F^{(\alpha,(\I_j))}$, we will not impose anymore an embedded structure for the non-zero coefficients. \\

\noindent \textbf{Proof of Corollary \ref{cor:validity}.}
Let us set $H={\mathcal{D}}_f(\alpha_0)=\rho_f(\alpha_0)$ and let us start by showing that
$$
{\mathcal{D}}_{S(f)} (\alpha ) = \rho_{S(f)}(\alpha ) = H.
$$
Recall that $S(f)$ is defined by \eqref{eq:sature}. First, it is clear that
$$
\{x : h_{{f}} (x) = \alpha_0 \} \subset  \{x : h_{S(f)} (x) = \alpha \},
$$
hence $\mathcal{D}_{S(f)} (\alpha ) \ge H $. Since $\mathcal{D}_{S(f)} (\alpha ) \leq \rho_{S(f)}(\alpha )$ by Proposition \ref{prop:comparison}, it remains to prove that $\rho_{S(f)}(\alpha ) \le H$.
This inequality is a consequence of the properties of the increasing wavelet leader profile \cite{Bastin:16}. Indeed, we have 
\begin{eqnarray*}
\rho_{S(f)}(\alpha ) & = & \limsup_{j \to + \infty} \frac{\log \#\{k : s_{j,k} \neq 0\}}{\log 2^j} \\
& \le & \limsup_{j \to + \infty} \frac{\log \#\{k : \sup_{\lambda' \subseteq \lambda_{j,k}}|c_{\lambda'}| \geq 2^{-j (h_{max}+1)}\}}{\log 2^j}\\
& =&  \rho_{cum,f}(h_{max}+1).
\end{eqnarray*}
Using \eqref{eq:rhocum}, we obtain
$$
\rho_{cum,f}(h)= \sup_{h' \le h} \rho_f(h') \le H,
$$
hence the announced upper bound.

To conclude, note now that the function $S(f)$ can  be written as a function $F^{(\alpha,(\I_j))}$ where 
$$
\I_j = \bigcup_{k\, : \, s_{j,k} = 2^{-\alpha j}} \big[2^{-j}k, 2^{-j}(k+1)\big].
$$
Hence, for every $\eta \in (1-H,1]$, the maximum of $\rho_{L_\eta S(f)}$ remains equal to $H$ by applying the second part of Theorem \ref{TheoDetect}. 
\hfill \qed \\

Finally, let us give an example which proves that this method cannot detect all failures in the multifractal formalism of wavelet series of the considered form. To this end,  we introduce two symmetric Cantor sets $\C_1 \subset [0,1/4]$ and $\C_2$ whose first step of the construction is $[3/4,1]$. We denote by $H_1$ and $H_1$ the Hausdorff dimension of $\C_1$ and $\C_2$ respectively, assuming that $H_1<H_2$. We set $J_1=2$ and $J_n = 2^{J_{n-1}}$. Finally, we define $f=f_1+f_2$ where
$$
f_1 = \sum_{j\in \N}\sum_{k=0}^{2^j-1}c^{(1)}_{j,k} \psi_{j,k} \quad \text{with} \quad c^{(1)}_{j,k}= \begin{cases} 2^{-\alpha j } &  \text{ if } \lambda_{j,k} \cap \C_1 \neq \emptyset \\ 
0 & \text{elsewhere}\\
\end{cases}
$$
and 
$$
f_2 = \sum_{j\in \N}\sum_{k=0}^{2^j-1}c^{(2)}_{j,k} \psi_{j,k} \quad \text{with} \quad c^{(2)}_{j,k}= \begin{cases} 2^{-\alpha j} &  \text{ if } \lambda_{j,k} \cap \C_2 \neq \emptyset \text{ and } \lambda \subset [1-2^{-J_n},1]  \text{ with } J_n \leq j < J_{n+1},\\ 
0 & \text{elsewhere.}
\end{cases}
$$
The function $f_1$ satisfies 
$${\mathcal{D}}
_{f_1}(\beta)= \rho_{f_1}(\beta)= \begin{cases} H_1 & \text{ if } \beta= \alpha ,\\
1 & \text{ if } \beta=+\infty, \\
- \infty & \text{ otherwise, }\end{cases}
$$
when $f_2$ satisfies
$$
\mathcal{D}_{f_2}(\beta) = \begin{cases} 0 & \text{ if } \beta= \alpha , \\ 1 & \text{ if } \beta=+\infty ,\\ - \infty & \text{ otherwise, } \end{cases}
$$
since $h_{f_2}(x) = +\infty$ for every $x \in [0,1[$ and $h_{f_2}(1)=\alpha$. An easy computation shows that the number of wavelet leaders $d^{(2)}_{j,k}$ equal to $ 2^{- \alpha j} $ is of order $ 2^{ H_2 j-J_n}$ if $J_n \le j <J_{n+1}$ (note that the coefficients are hierarchical and hence $d^{(2)}_{j,k}$ equals either $ 2^{-\alpha j}$ or $0$). It follows that  $\rho_f(\alpha)=H_2$ and $\mathcal{D}_f(\alpha) = H_1$, hence  $f$ does not satisfy the leader large deviation multifractal formalism at $\alpha$.

We consider the lacunarization of the process of parameter $\eta$. We have
$$
\EE \big[ \#\{ k\in \{0, \dots, 2^j-1\} :  \xi_{j,k}c^{(2)}_{j,k} \neq 0 \}\big] \simeq 2^{H_2j - J_n} 2^{(\eta-1)j} \simeq 2^{(H_2+\eta-1)j - \log_2 j}
$$
for $\frac{J_{n+1}}{2} \le j < J_{n+1} $, which leads to a constant maximum value for $\rho_{L_{\eta}(f_2)}$  equal to $H_2$, obtained for $\beta = \frac{\alpha}{H_2+\eta-1}$. It follows that the lacunarization operator does not does not reveal the overestimation of $\mathcal{D}_f$ via $\rho_f$.\\

\noindent \textbf{Acknowledgment.}  The authors thank the referees for their numerous remarks and suggestions, which greatly improved this text.

\bibliographystyle{plain}
\bibliography{ContrEx.bib}

\newpage

\appendix
\section{Lacunary wavelet series on Cantor sets : Proof of Proposition \ref{thm:LWS}}\label{sec:LWS}

Recall that the model is defined by the
random wavelet series $F_{\alpha, \eta ,r}=  \sum_{j \in \N}\sum_{k=0}^{2^j-1}c_{j,k}\psi_{j,k}$
with
$$
c_{j,k}= \begin{cases}
2^{-\alpha j} \xi_{j,k} & \text{ if }  k \in \Gamma_{j} \\
0 & \text{ otherwise,}
\end{cases}
$$
and
$$
\Gamma_{j} = \big\{ k  \in \{0, \dots, 2^j-1 \} : \lambda_{j,k}\subset  \C_{n_{j}}\big\}
\quad \text{ with } \quad n_{j}= \left\lfloor -
\frac{j}{\log_2 r}\right\rfloor= \left\lfloor \gamma j\right\rfloor,
$$
 where $\gamma = \dim_{\mathcal{H}}\mathcal{C}(r)$ and 
where  $ (\xi_{j,k})_{j,k}$ denotes a sequence of independent random Bernoulli
variables of parameter $2^{(\eta-\gamma)j}$.  By construction, the number of random wavelet coefficients at scale $j$
is of order $ 2^{ \gamma j }$. More precisely, if $j = \frac{n}{\gamma }+ b$ with $b \in [0,\frac{1}{\gamma})$, then the number of random coefficients at scale $j$ is given by
$
2^{n+b}= 2^{\lfloor \gamma j \rfloor+b}.
$
Consequently, one obtains
\begin{equation}\label{eq:espFjclassic}
 2^{\eta j -1}\leq \EE[\#\{\lambda \in \Lambda_{j} : c_{\lambda} = 2^{-\alpha j}\} ] \leq  2^{\lfloor \gamma j \rfloor+b} 2^{(\eta-\gamma)j} \leq 2^{\eta j + \frac{1}{\gamma}}.
\end{equation}

Let us start by studying the maximal regularity of the lacunary
wavelet series $F_{\alpha, \eta ,r}$. Clearly, if $x \notin \C(r)$, then $3 \lambda_{j}(x)
\cap \C(r) = \emptyset$ for $j$ large enough and $h_{f}(x) = +
\infty$. 

\begin{lemma}\label{lem:regmaxclassic}
    Almost surely, there is $J \in \N$ such
that
$$
d_\lambda \geq \sup_{\lambda' \subset  \lambda}|c_{\lambda'}| \geq 2^{-\frac{\alpha
  }{\eta}(\gamma j + \log_{2} \gamma j)} 
$$
for every $\lambda \in \Gamma_{j}$ with $j \geq J$. In particular, $h_{f}(x) \leq \frac{\alpha
  \gamma }{\eta}$ for every $x \in \C(r)$.
\end{lemma}

\begin{proof}
 For every $j \ge 0$, let us consider the event
$$
\Omega_j = \{ \exists \lambda \in \Gamma_{j} \, \text{ such that
}\sup_{\lambda' \subset  \lambda} |c_{\lambda'}|  < 2^{-\frac{\alpha
  }{\eta}(\gamma j + \log_{2} \gamma j)} \}.
$$
We fix the scale  $j_0 =\lfloor \frac{1}{\eta} (\gamma j + \log_2 \gamma 
j)\rfloor$  that satisfies 
$2^{-\alpha j_0} \ge 2^{-\frac{\alpha}{\eta}(\gamma j + \log_{2}
  \gamma j)}$. By the independence of the Bernoulli random variables  and since there is about $2^{\gamma(j_0-j)}$ dyadic intervals in $\Gamma_{j_0}$ inside a dyadic interval $\lambda \in \Gamma_j$, we easily obtain   that 
$$\PP(\Omega_{j})
 \leq  \sum_{\lambda \in \Gamma_{j}} \PP(\forall \lambda_0 \subset  \lambda \text{ with } \lambda \in \Gamma_{j_0} , \xi_{\lambda_{j_0}}=0) \leq C\left(\frac{2}{e}\right)^{\gamma j}
$$
for some positive constant $C$ and $j$ large enough. The conclusion follows from the
Borel-Cantelli lemma and Theorem  \ref{thm:waveletcharact}.
\end{proof}

Hence, the range for  the possible values of the H\"older exponent of
points belonging to $\C(r)$ is $[\alpha, \frac{\alpha\gamma}{\eta}]$.
Let us now describe the iso-H\"older sets of $f$. Let us start by
giving a covering of $\C(r)$ using balls centered at the dyadic points
associated with non-zero coefficients. For this purpose, let us
introduce for each scale $j$ the random set $A_{j}$ defined by
$$
A_{j} = \big\{ k \in \{0, \dots, 2^{j}-1\} : c_{j,k} = 2^{-\alpha j}\big\} .
$$
We proceed as in the proof of Proposition \ref{prop:recouvrementK} using Lemma \ref{lem:regmaxclassic} to get the following covering of $\C(r)$.

\begin{corollary}\label{cor:inclusionCantorclassic}
  Almost surely, one has
$$
\C(r) \subset  
 \limsup_{j \to + \infty} \bigcup_{k \in A_{j}} B\left(k2^{-j},
  2^{-\frac{\eta}{\gamma}(1-{\varepsilon_{j}})j}\right) 
\quad \text{where}
\quad {\varepsilon_{j}}= \frac{\log_{2} \gamma j}{\eta j}.$$
\end{corollary}

As done for the duplicated lacunary wavelet series, for every $\delta
\in (0,1]$, we consider the random set
\[
E_{\delta}:= \limsup_{j \rightarrow + \infty}\bigcup_{k \in A_{j}
}B\left( k 2^{-j}, 2^{-\delta (1-\varepsilon_{j})j} \right).
\]
Then, we set
\[
G_{\delta } := \bigcap_{0<\delta' < \delta} E_{\delta'} \setminus \bigcup_{\delta < \delta' \leq 1} E_{ \delta'} \ \text{ if } \ \delta <1  \quad \text{ and }   \quad G_1 := \bigcap_{0<\delta' < 1} E_{\delta'}.
\]
Using Remark \ref{rem:lemmaEdelta}, since the points lying outside $\C(r)$ have an infinite H\"older
exponent, we have
\begin{equation}\label{eq:Gdeltaclassic}
  G_{\delta} = \left\{x \in [0,1] : h_{F_{\alpha,\eta, r}}(x) = \frac{\alpha}{\delta}\right\}.
\end{equation}
Consequently, in order to compute the multifractal spectrum of $F_{\alpha,\eta, r}$,
it suffices to study the Hausdorff dimension of the sets
$G_{\delta}$. We also already know that we can restrict ourselves to
the values of $\delta$ belonging to $[\frac{\eta}{\gamma},1]$.  Using \eqref{eq:espFjclassic}, we can estimate the cardinality of $A_{j}$. This estimation then provides an upper bound for $\dim_{\mathcal{H}}G_{\delta}$.

\begin{proposition}\label{prop:upperboundclassic}
 Almost surely, 
 \begin{itemize}
     \item for every $\varepsilon >0$, there is $J \in \N$ such
that
$$
2^{( \eta - \varepsilon) j} \le \# A_{j} \leq 2^{( \eta + \varepsilon) j}  \quad \forall j \geq J.
$$
     \item for every $\delta \in [\frac{\eta}{\gamma},1]$, one
 has
 $$
 \dim_{\mathcal{H}}(G_{\delta}) \leq \frac{\eta}{\delta}
 $$
and  $\mathcal{H}^{\eta/\delta}(E_{\delta'}) = 0$ for all
 $\delta'>\delta$. 
 \end{itemize}
\end{proposition}

\begin{proof}
The first part follows directly from \eqref{eq:espFjclassic} combined with Chebyshev's inequality and the Borel-Cantelli lemma.

For the second part, we fix $\delta '<\delta$ and we consider the set $E_{\delta'}$ as a covering of $G_{\delta}$. The first part of the proof implies that almost surely, for every $\varepsilon>0$,
there is $J \in \N$ such that 
$$\sum_{j \geq J} \sum_{k \in F_{j}} 2^{-\delta' (1-\varepsilon_{j})s j}
 \leq  \sum_{j \geq J}2^{(\eta + \varepsilon -\delta'
   (1-\varepsilon_{j})s )j} <+\infty
$$
if $s > \frac{ \eta + \varepsilon}{\delta'(1-\varepsilon)}$, since
$\varepsilon_{j} \leq \varepsilon$ for $j$ large enough. Hence
$
\mathcal{H}^s (G_{\delta})  < + \infty
$
and therefore, $\dim_{\mathcal{H}}(G_{\delta}) \leq s$.

Finally, note  that ${E}_{\delta'} \subset 
\bigcap_{0<\delta''< \delta'}E_{\delta''}$. By proceeding as
previously, one gets that $\dim_{\mathcal{H}}E_{\delta'} \leq
\frac{\eta}{\delta'}$. The conclusion follows easily. 
\end{proof}

\begin{proposition}\label{prop:lowerboundclassic}
With probability one, for every $\delta \in [\frac{\eta}{\gamma},1]$, $\dim_{\mathcal{H}}(G_{\delta}) \geq  \frac{\eta}{\delta}$.
\end{proposition}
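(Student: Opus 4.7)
The plan is to derive Proposition \ref{prop:lowerboundclassic} from the mass transference principle (Theorem \ref{thm_transference}), leveraging the key observation that the inflation exponent in the theorem exactly converts the natural covering of $C(r)$ provided by Corollary \ref{cor:inclusionCantorclassic} into the limsup set $E_\delta$.

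First, I would take $X = C(r)$ in Theorem \ref{thm_transference} with $s = \gamma$. The symmetric Cantor set $C(r)$ is Ahlfors $\gamma$-regular, so the two-sided estimate \eqref{eqGMT} holds for some $a,b,r_{0}>0$. From Corollary \ref{cor:inclusionCantorclassic}, almost surely (on an event not depending on any parameter) $C(r) \subseteq E_{\eta/\gamma}$, so that
\[
\mathcal{H}^{\gamma}\bigl(C(r) \cap E_{\eta/\gamma}\bigr) = \mathcal{H}^{\gamma}(C(r)) \in (0,+\infty).
\]
Now fix $\delta \in [\eta/\gamma, 1]$ and set $s' = \eta/\delta$; note $s' \leq \gamma = s$. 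The balls defining $E_{\delta}$ have radii $r_{j}=2^{-\delta(1-\varepsilon_{j})j}$, so the inflated radii in Theorem \ref{thm_transference} are $r_{j}^{s'/s} = 2^{-\delta(1-\varepsilon_{j})j \cdot \eta/(\delta\gamma)} = 2^{-(\eta/\gamma)(1-\varepsilon_{j})j}$, which are precisely the radii defining $E_{\eta/\gamma}$. Hence the hypothesis of the mass transference principle is satisfied, and we conclude
\[
\mathcal{H}^{\eta/\delta}\bigl(C(r) \cap E_{\delta}\bigr) = \mathcal{H}^{\eta/\delta}(C(r)) > 0.
\]

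Second, I would pass from $E_{\delta}$ to $G_{\delta}$. Since $E_{\delta''} \subseteq E_{\delta'}$ whenever $\delta' \leq \delta''$ (larger exponent means smaller balls), one has $E_{\delta} \subseteq \bigcap_{0<\delta'<\delta}E_{\delta'}$, and therefore
\[
G_{\delta} \supseteq E_{\delta} \setminus \bigcup_{\delta < \delta' \leq 1} E_{\delta'}.
\]
By Proposition \ref{prop:upperboundclassic}, almost surely $\mathcal{H}^{\eta/\delta}(E_{\delta'}) = 0$ for every $\delta'>\delta$. Taking the union over a countable dense sequence $\delta_{n} \searrow \delta$ and using monotonicity of $\delta' \mapsto E_{\delta'}$, we get $\mathcal{H}^{\eta/\delta}\bigl(\bigcup_{\delta'>\delta}E_{\delta'}\bigr)=0$. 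Combined with the previous paragraph, this yields $\mathcal{H}^{\eta/\delta}(G_{\delta}) > 0$ and in particular $\dim_{\mathcal{H}}(G_{\delta}) \geq \eta/\delta$, valid on a single almost sure event uniform in $\delta \in [\eta/\gamma,1]$ (including the endpoint $\delta=1$, treated analogously using $G_{1}\supseteq E_{1}\setminus \emptyset$).

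The main obstacle is precisely checking that the scaling in the mass transference principle lines up with the geometry of the random covering, i.e.\ that the exponent matching $r_{j}^{s'/s} = 2^{-(\eta/\gamma)(1-\varepsilon_{j})j}$ is exact. The factor $(1-\varepsilon_{j})$, coming from the correction $\varepsilon_{j} = \log_{2}(\gamma j)/(\eta j)$, is identical in both families of balls, so no further control is needed; however, it is important to note that $\varepsilon_{j}\to 0$ is what allows one to deduce the sharp Hausdorff dimension $\eta/\delta$ rather than a strictly smaller value. Beyond that, the argument is structural: the Ahlfors regularity of the symmetric Cantor set, the already established lower bound $\#F_{j} \geq 2^{(\eta-\varepsilon)j}$ from Lemma \ref{lem:cardFjclassic} (needed implicitly to make the limsup nontrivial), and the upper bound Proposition \ref{prop:upperboundclassic} together provide all the ingredients.
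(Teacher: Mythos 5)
Your argument is correct and is essentially the paper's own proof: the same application of the mass transference principle with $X=C(r)$, $s=\gamma$, $s'=\eta/\delta$, $r_j=2^{-\delta(1-\varepsilon_j)j}$, the same observation that the inflated balls reproduce the covering of $C(r)$ from Corollary \ref{cor:inclusionCantorclassic}, and the same passage from $E_\delta$ to $G_\delta$ via Proposition \ref{prop:upperboundclassic} and a countable dense family of $\delta'$. The only point you gloss over is that Theorem \ref{thm_transference} requires the balls $B_n$ to be centered in $X$, and the centers $k2^{-j}$, $k\in F_j$, need not lie in $C(r)$; the paper fixes this by enlarging the radii by a constant independent of $j$ so that the balls may be recentered at nearby points of the Cantor set, a harmless adjustment you should include for completeness.
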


\begin{proof}
From Corollary \ref{cor:inclusionCantorclassic},
we know that almost surely, $$
\C(r) \subset  
 \limsup_{j \to + \infty} \bigcup_{k \in F_{j}} B\left(k2^{-j},
  2^{-\frac{\eta}{\gamma}(1-{\varepsilon_{j}})j}\right).
$$
By multiplying the radius of the balls by a constant independent of
$j$, we may moreover assume that the balls are centered at points of the
Cantor set $\C(r)$. Note that $\C(r)$ is a Cantor set  and then it satisfies assumption (\ref{eqGMT}) of the General mass transference principle given in Theorem
\ref{thm_transference}, see Theorem 4.14 of \cite{Mattila}. Consequently, this principle gives
\[
\mathcal{H}^{\eta/\delta} \left(  \C(r) \cap 
 \limsup_{j \to + \infty} \bigcup_{k \in F_{j}} B\left(k2^{-j},
  2^{-\delta(1-{\varepsilon_{j}})j}\right) \right)  =
\mathcal{H}^{\eta/\delta}(\C(r)). 
\]
Since $\eta/\delta \leq \gamma$, we obtain
$\mathcal{H}^{\eta/\delta}(E_{\delta}) > 0 $ and $\dim_{\mathcal{H}}(E_{\delta}) \ge \eta/\delta$. The computation of the lower bound of $\dim_{\mathcal{H}}(G_\delta)$ is then obtained as in the proof of Proposition \ref{prop_sup}.
\end{proof}

In order to get Proposition \ref{thm:LWS}, it remains now to compute the leader large deviation spectrum of $F_{\alpha, \eta, r}$.

\begin{proposition}
Almost surely, $\mathcal{D}_{F_{\alpha, \eta,r}}(h)= \rho_{F_{\alpha, \eta,r}}(h)$ for every $h \in [0,
+ \infty]$. 
\end{proposition}

\begin{proof}
From the construction of the lacunary wavelet series $F_{\alpha, \eta, r}$ and using Lemma \ref{lem:regmaxclassic}, it is
clear that
$\rho_{F_{\alpha, \eta,r}}(h)=- \infty$ if $h \notin [\alpha, \frac{\alpha \gamma
}{\eta}] \cup \{+ \infty\}$ and that $\rho_{F_{\alpha, \eta,r}}(+ \infty) = 1$. So, let
$h \in [\alpha, \frac{\alpha \gamma
}{\eta}]$. Then, using the first part of Proposition \ref{prop:upperboundclassic}, one has
$$\#\{ \lambda \in \Lambda_{j } : 2^{- (h+\varepsilon)j} \leq
e_{\lambda} \leq 2^{- (h-\varepsilon)j} \} \leq \sum_{j' = \lfloor
  \frac{h-\varepsilon}{\alpha} j\rfloor}^{\lfloor
  \frac{h+\varepsilon}{\alpha} j\rfloor +1} \#A_{j'} \leq C j 2^{(\eta
  + \varepsilon)\frac{h+\varepsilon}{\alpha} j}$$
for some constant $C>0$ and $j$ large enough. The upper bound for
$\rho_{F_{\alpha, \eta, r}}(h)$ follows directly. The lower bound is given by the general
inequality $\mathcal{D}_{F_{\alpha, \eta, r}} \leq \rho_{F_{\alpha, \eta, r}}$. 
\end{proof}

\end{document}